\def\hang{\hangindent\parindent}
\def\rf{\par\noindent\hang}
\newtheorem{theorem}{Theorem}
\newtheorem{lemma}{Lemma}
\newtheorem*{theorem*}{Theorem}
\theoremstyle{definition}
\theoremstyle{remark}
\DeclareMathAlphabet{\mathpzc}{OT1}{pzc}{m}{it}
\newcommand{\SSB}{\text{\small SSB}}
\newcommand{\SSW}{\text{\small SSW}}
\def\hang{\hangindent\parindent}
\def\rf{\par\noindent\hang}
\begin{document}

\baselineskip=20pt

\begin{center}
{\bf \Large Conditional assessment of the impact of a Hausman pretest on confidence intervals}
\end{center}



\bigskip

\begin{center}
{\bf \large Paul Kabaila$^*$, Rheanna Mainzer and Davide Farchione}
\end{center}

\medskip

\begin{center}
{\sl Department of Mathematics and Statistics, La Trobe University, Australia}
\end{center}

\vspace{1cm}

\noindent{\sl JEL classification:} \newline
\noindent C18 \newline
\noindent C23 \newline
\noindent C52

\vspace{1cm}

\noindent {\sl Keywords:} \newline
\noindent Conditional inference \newline
\noindent Coverage probability \newline
\noindent Fixed effects model \newline
\noindent Hausman specification test \newline
\noindent Panel data \newline
Random effects model

\vspace{7.5cm}

\noindent * Corresponding author. Department of Mathematics and Statistics,
La Trobe University, Victoria 3086, Australia. Tel.: +61 3 9479 2594; fax +61 3 9479 2466.
{\sl E-mail address:} P.Kabaila@latrobe.edu.au.

\newpage

\begin{center}
{\bf ABSTRACT}
\end{center}

We assess the impact of a Hausman pretest, applied to panel data, on a confidence interval for the slope, {\sl conditional} on the observed values of the time-varying covariate. This assessment has the advantages that it
(a) relates to the values of this covariate at hand, (b) is valid irrespective of how this covariate is generated,
(c) uses finite sample results and (d) results in an assessment that is determined by the values
of this covariate and only 2 unknown parameters. Our conditional analysis shows that the confidence interval
 constructed after a Hausman pretest should not be used.

\newpage

\section{Introduction}

For a linear regression model, where the explanatory variables are observed values of random variables, it has long been recognised that,
under commonly-occurring circumstances, statistical inference should be carried out conditional on these observed values. Aldrich (2005) describes the seminal contributions of R. A. Fisher (starting in 1922) and M. S. Bartlett to the recognition of this requirement.
Adoption of this requirement has the great advantage that the statistical
inference is valid irrespective of how the explanatory variables are generated.
In the econometric literature, an early recognition of this advantage is provided by Koopmans (1937, pp 29 and 30).
A modern description of the justification for this requirement is given in Example 4.3 of Cox (2006). The same justification also applies to linear regression models that include random effects. In particular, this justification applies to the model for longitudinal data that we consider. Statistical inference should be carried out conditional on the observed values of the time-varying covariate.
The statistical inference that we consider is a confidence interval for the slope parameter.
This confidence interval is assessed by its coverage probability and its scaled expected length, where the scaling is with respect to
the expected length of the standard confidence interval with the same minimum coverage probability.

Our aim is to analyze the effect of a Hausman pretest on the coverage probability and scaled expected length of a confidence interval for the slope, {\sl conditional} on the observed values of the time-varying covariate. The four main advantages of this analysis are the following.
Firstly, our analysis relates to the values of the time-varying covariate at hand and not to some other values that might have occurred, but are known to not have occurred. Secondly, our analysis has the great advantage that it applies irrespective of the Data Generating Process (DGP) for the time-varying covariate. We do not need to either restrict to some particular DGP, such as a first order autoregression, or concern ourselves with the possible values of the parameters that describe the chosen DGP. Thirdly, our analysis is a finite sample analysis, so that it does not rely on approximations based on large sample results, whose accuracy can be difficult to ascertain in the context of real life sample sizes.
Fourthly, as we show, the conditional coverage and scaled expected length of the confidence interval for the slope, constructed after a Hausman pretest, are determined by the time-varying covariate and only 2 unknown parameters: $\gamma$ which is a scaled version of a non-exogeneity parameter and $\nu$ which is the ratio (variance of random effect) / (variance of the random error), where $\gamma \in \mathbb{R}$ and
$\nu \in (0, \infty)$.

Previous analyses of the effect of a Hausman pretest on either a hypothesis test (Guggenberger, 2010)
or a confidence interval (Kabaila, Mainzer and Farchione, 2015) for the slope parameter, have been carried out {\sl unconditionally}. These analyses are restricted to particular DGP's for the time-varying covariate. For example, Kabaila, Mainzer and Farchione (2015) 
consider two models for the correlation matrix of the time-varying covariate: compound symmetry and first order autoregression.
It can be shown that the finite sample {\sl unconditional} coverage and scaled expected length of the confidence interval for the slope, constructed after a Hausman pretest, are determined by 4 known quantities, the unknown parameters $\gamma$ and $\nu$ and also the correlation structure of the time-varying covariate (Kabaila, Mainzer and Farchione, 2015). This additional dependence on this correlation structure is problematic: we are required to assign not only a model for this structure but also plausible ranges of the parameters that describe this model.

By contrast, by Theorems 1 and 4 of the present paper, the finite sample {\sl conditional} coverage and scaled expected length of the confidence interval for the slope, constructed after a Hausman pretest, are determined by the time-varying covariate, either 2 (coverage) or 3 (scaled expected length) known quantities and only two unknown parameters $\gamma$ and $\nu$. The correlation structure of the time-varying covariate is irrelevant.
Let $CP(\gamma, \nu)$ denote the conditional coverage probability of this confidence interval for the slope.
Wooldridge (2013) provides a balanced panel data set, \textsl{airfare}, used in exercise 14 of Chapter 14.  Suppose we are interested in the relationship between \textsl{concen} (a measure of market share) and \textsl{lfare} (log fare).  We use this data to demonstrate the effect of the Hausman pretest on the coverage probability and scaled expected length of a confidence interval for the slope parameter, conditional on the observed values of \textsl{concen}, the time-varying covariate.

To assess $CP(\gamma, \nu)$ we could use the \textsl{confidence coefficient}.
Throughout the paper, we use $g$ and $n$ to denote variables taking values in $\mathbb{R}$ and
$(0, \infty)$, respectively.
The confidence coefficient is the infimum over both $g$ and $n$ of $CP(g, n)$. Irrespective of the values of $\gamma$ and $\nu$,
$CP(\gamma, \nu)$ is bounded below by the \textsl{confidence coefficient}.
To find the \textsl{confidence coefficient} for the \textsl{airline} data we use Theorem \ref{thm_cov_even} of Section \ref{cp_pretest_interval}, which states that $CP(\gamma, \nu)$ is an even function of the unknown parameter $\gamma$. We find that the minimum over $g$ of $CP(g, n)$ is an increasing function of $n$. A graph of this function is shown in Figure 1. All computational results reported in this paper were found using programs
written in {\tt R}.
The \textsl{confidence coefficient} of the \textsl{airfare} data is approximately 0.19, which is
the limit as $n$ approaches 0 of  $\min_{g} CP(g, n)$.

However, this \textsl{confidence coefficient} does not utilize the information provided by the data about the unknown parameter $\nu$.
If the data strongly contradicts a value of $\nu$ near 0 then the \textsl{confidence coefficient} is an excessively conservative assessment of
$CP(\gamma, \nu)$.
An estimate of $\nu$ from the \textsl{airfare} data is $\widehat{\nu} = 12.78$.
In Section \ref{sec_conf_int_nu} we describe an equi-tailed confidence interval for $\nu$.
Theorem \ref{thm_pivot} gives a pivotal quantity which is key in the construction of this confidence interval.
The 98\% equi-tailed confidence interval for $\nu$ from the \textsl{airfare} data is $[11.3976, 14.3829]$.
This confidence interval strongly contradicts a value of $\nu$ near 0.

We therefore propose the following new assessment of
$CP(\gamma, \nu)$.
This new assessment is an equi-tailed confidence interval for the minimum over $g$ of $CP(g, \nu)$.
For the \textsl{airfare} data, the 98\% equi-tailed confidence interval for this minimum over $g$ is $[0.8889, 0.9026]$.


\FloatBarrier

\begin{figure}[h]
\centering
\includegraphics[scale = 0.8]{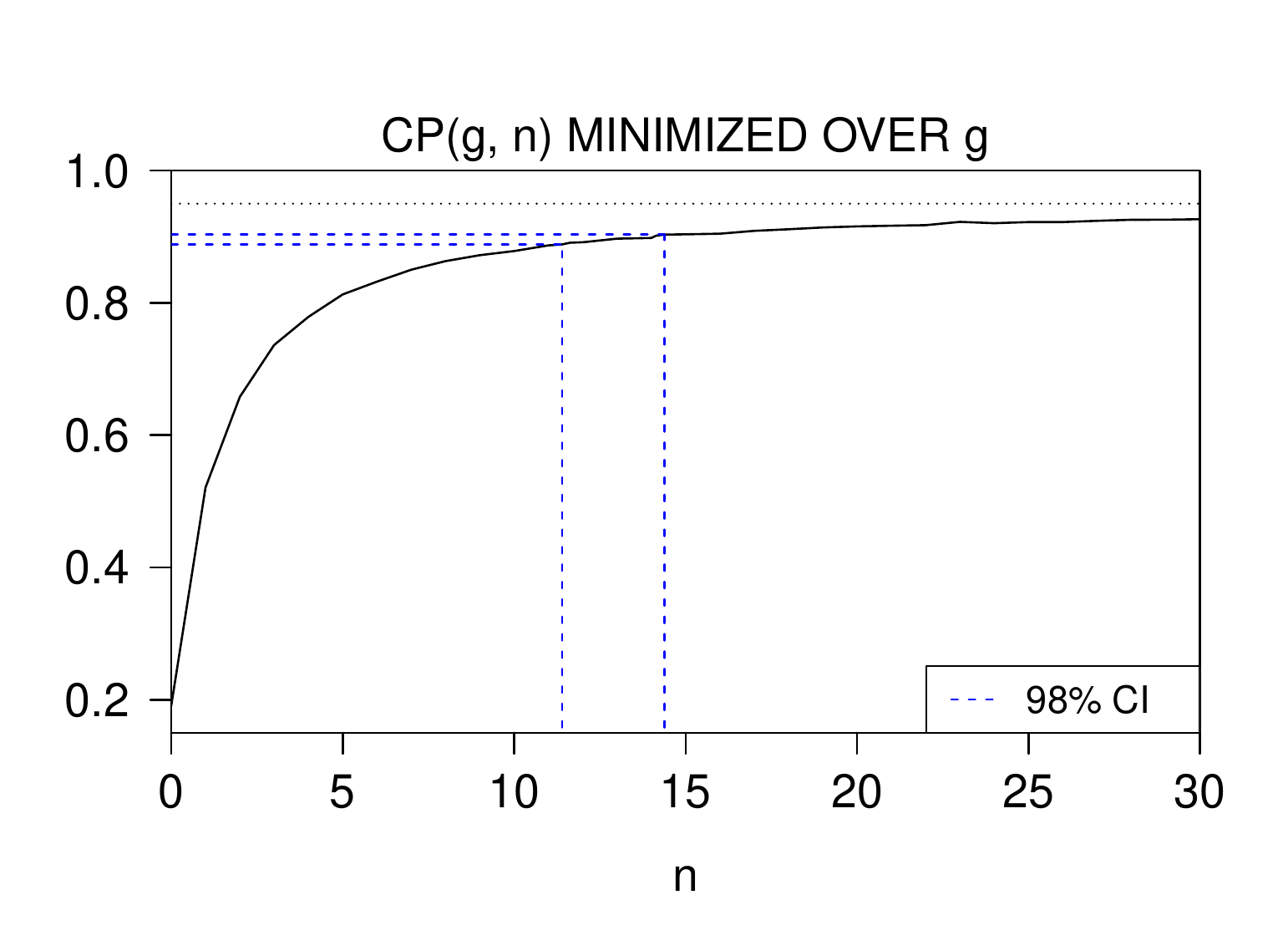}
\caption{For the \textsl{airfare} data, a
graph of $\min_{g} CP(g, n)$, as a function of $n$.
 The nominal significance level of the Hausman pretest is $\alpha_H = 0.05$ and the nominal conditional coverage probability of the confidence interval for the slope, constructed after the Hausman pretest, is $1 - \alpha = 0.95$.  A 98\% confidence interval for $\nu$ is given by the points where the dashed vertical lines intersect the horizontal axis and a 98\% confidence interval for $\min_{g} CP(g, \nu)$ is given by the points where the dashed horizontal lines intersect the vertical axis.}
\label{airfare_dat_plot}
\end{figure}

\FloatBarrier



\section{The model and the practical two-stage \newline procedure}

We consider a model for longitudinal data, for which $i$ denotes the individual ($i=1, \dots, N$) and $t$ denotes the time ($t=1, \dots, T$).  By interpreting $i$ as the cluster index and $t$ as the unit of analysis, our results also apply to the analysis of clustered data.  Let $y_{it}$ and $x_{it}$ denote the response variable and the time-varying covariate, respectively, for the $i$'th individual at time $t$.  Let $x = (x_{11}, \dots,  x_{1T}, \dots, x_{N1}, \dots, x_{NT})$.  Our statistical analysis is {\sl conditional} on the observed value of $x$, so that we treat $x$ as given.
Suppose that
\begin{equation}
\label{random_effects_model_eta}
y_{it} = a + b \, x_{it} + \xi \, \overline{x}_i + \eta_i + \varepsilon_{it}.
\end{equation}
for $i = 1, \dots, N$ and $t = 1, \dots, T$, where $\overline{x}_i = T^{-1} \sum_{t=1}^T x_{it}$.
Also suppose that the $\eta_i$'s and the $\varepsilon_{it}$'s are independent, with the $\eta_i$'s iid $N(0, \sigma_{\eta}^2)$ and the
$\varepsilon_{it}$'s iid $N(0, \sigma_{\varepsilon}^2)$. The $\varepsilon_{it}$'s and $\eta_i$'s are unobserved.  
This is the ``correlated random effects'' model described, for example, by Wooldridge (2013).
If $\xi = 0$ then the $x_{it}$'s are exogenous.
Thus we call $\xi$ a non-exogeneity parameter.

Suppose that the inference of interest is a confidence interval for the slope parameter $b$ with coverage probability $1-\alpha$,
conditional on $x$.
Assume, for the moment, that $\sigma_{\varepsilon}$ and $\sigma_{\eta}$ are known.
When $\xi = 0$ we can estimate $b$ efficiently from \eqref{random_effects_model_eta} using GLS.  Let $\widehat{b}$ denote this GLS estimator of $b$.
Also let $\nu = \sigma^2_{\eta} / \sigma^2_{\varepsilon}$ and $z_c = \Phi^{-1}(c)$, where $\Phi$ denotes the $N(0, 1)$ cdf.
A confidence interval for $b$ that has coverage probability $1-\alpha$ conditional on $x$, when $\xi = 0$, is
\begin{equation*}
I(\sigma_{\varepsilon}, \nu) = \left[ \widehat{b} - z_{1-\alpha/2} \, \left( \text{Var}(\widehat{b} \, | \, x ) \right)^{1/2} , \, \widehat{b} + z_{1 - \alpha/2} \, \left( \text{Var}(\widehat{b} \, | \, x) \right)^{1/2} \right],
\end{equation*}
where $\text{Var}(\widehat{b} \, | \, x)$ denotes the variance of $\widehat{b}$, conditional on $x$.
Adding and subtracting $b \, \overline{x}_i$ to \eqref{random_effects_model_eta} gives
\begin{align}
y_{it}
= a + b_W \, (x_{it} - \overline{x}_i) + b_B \, \overline{x}_i + \eta_i + \varepsilon_{it}, \label{CRE_model}
\end{align}
where $b_W = b$ and $b_B = b + \xi$.
Note that $b_W$ is the coefficient of a ``within'' effect and $b_B$ is the coefficient of a ``between'' effect. 
Let $\widetilde{b}_W$ and $\widetilde{b}_B$ denote the GLS estimators of $b_W$ and $b_B$, respectively, from model \eqref{CRE_model}.
A confidence interval for $b$ that has coverage probability $1 - \alpha$ conditional on $x$, irrespective of the value of $\xi$, is
\begin{equation*}
J(\sigma_{\varepsilon}) = \left[ \widetilde{b}_W - z_{1-\alpha/2} \, \left( \text{Var}(\widetilde{b}_W \, | \, x ) \right)^{1/2} , \, \widetilde{b}_W + z_{1-\alpha/2} \, \left( \text{Var}(\widetilde{b}_W \, | \, x ) \right)^{1/2} \right],
\end{equation*}
where $\text{Var}(\widetilde{b}_W \, | \, x)$ denotes the variance of $\widetilde{b}_W$, conditional on $x$.

As proved in the appendix, $\widetilde{b}_W$ and $\widetilde{b}_B$ can also be obtained as follows.
Averaging \eqref{CRE_model} over $t=1, \dots, T$ for each $i=1, \dots, N$, we obtain the model
\begin{equation}
\label{BE_model}
\overline{y}_i = a + b_B \, \overline{x}_i + \eta_i + \overline{\varepsilon}_i.
\end{equation}
where $\overline{y}_i = T^{-1} \sum_{t=1}^T y_{it}$ and $\overline{\varepsilon}_i = T^{-1} \sum_{t=1}^T \varepsilon_{it}$.
The OLS estimator of $b_B$ based on this model is equal to $\widetilde{b}_B$.
Subtracting \eqref{BE_model} from \eqref{CRE_model} we obtain the ``fixed effects'' model
\begin{equation}
\label{FE_model}
y_{it} - \overline{y}_i = b_W \, (x_{it} - \overline{x}_i) + (\varepsilon_{it} - \overline{\varepsilon}_i).
\end{equation}
The OLS estimator of $b_W$ based on this model is equal to $\widetilde{b}_W$.

In practice we do not know whether $\xi = 0$ or not.  The usual procedure is to use a Hausman (1978) pretest to test the null hypothesis that $\xi = 0$ ($b_W = b_B$) against the alternative hypothesis that $\xi \not= 0$ ($b_W \not = b_B$).
 We consider this pretest, based on the test statistic
\begin{equation*}
H(\sigma_{\varepsilon}, \nu) = \frac{ (\widetilde{b}_W - \widetilde{b}_B)^2}{\text{Var}(\widetilde{b}_W \, | \, x) + \text{Var}(\widetilde{b}_B \, | \, x)},
\end{equation*}
where $\text{Var}(\widetilde{b}_B \, | \, x)$ denotes the variance of $\widetilde{b}_B$, conditional on $x$.  This test statistic has a $\chi^2_1$ distribution under the null hypothesis.  Suppose that we accept the null hypothesis that $\xi = 0$ if $H(\sigma_{\varepsilon}, \nu) \leq z^2_{1-\alpha_H/2}$; otherwise we reject the null hypothesis.  The level of significance of this test is $\alpha_H$.
We consider the following two-stage procedure.  If the null hypothesis is accepted then use the confidence interval $I(\sigma_{\varepsilon}, \nu)$; otherwise use the confidence interval $J(\sigma_{\varepsilon})$.  Let $K(\sigma_{\varepsilon}, \nu)$ denote the confidence interval, with nominal coverage $1 - \alpha$, that results from this two-stage procedure.

Of course, in practice, $\sigma_{\varepsilon}$ and $\nu$ need to be estimated from the data.  Let $\widehat{\sigma}_{\varepsilon}$ and $\widehat{\nu}$ denote estimators of $\sigma_{\varepsilon}$ and $\nu$, respectively, described in Section \ref{EstimationVariances}.  Let $H(\widehat{\sigma}_{\varepsilon}, \widehat{\nu})$, $I(\widehat{\sigma}_{\varepsilon}, \widehat{\nu})$, $J(\widehat{\sigma}_{\varepsilon})$ and $K(\widehat{\sigma}_{\varepsilon}, \widehat{\nu})$ denote the Hausman test statistic, the confidence interval based on $\widehat{b}$, the confidence interval based on $\widetilde{b}_W$ and the confidence interval that results from the two-stage procedure, respectively, when $\sigma_{\varepsilon}$ and $\nu$ are replaced by their estimators. Our aim is to assess the coverage probability and expected length properties of $K(\widehat{\sigma}_{\varepsilon}, \widehat{\nu})$, {\sl conditional} on $x$.

\subsection{Estimation of $\boldsymbol{\sigma_{\varepsilon}}$ and $\boldsymbol{\nu}$}
\label{EstimationVariances}

We use the models \eqref{BE_model} and \eqref{FE_model}
to motivate the estimators of $\sigma^2_{\varepsilon}$ and $\nu$ that we use.
Let $r_{it}$ denote the residual for the $i$'th individual at the $t$'th time when \eqref{FE_model} is estimated by OLS, i.e. let $r_{it} = (y_{it} - \overline{y}_i) - \widetilde{b}_W \, (x_{it} - \overline{x}_i)$ for $i=1, \dots, N$ and $t=1, \dots, T$.  Then $r_{it} =  (b_W - \widetilde{b}_W) \, (x_{it} - \overline{x}_i) + (\varepsilon_{it} - \overline{\varepsilon}_i)$. Our estimator
\begin{equation}
\label{est_var_varepsilon}
\widehat{\sigma}^2_{\varepsilon} = \frac{1}{N(T-1)} \sum_{i=1}^N \sum_{t=1}^{T} r_{it}^2.
\end{equation}
is motivated by the approximation $r_{it} \approx \varepsilon_{it} - \overline{\varepsilon}_i$ and the fact that
$E(\varepsilon_{it} - \overline{\varepsilon}_i) = 0$ and $\text{Var}(\varepsilon_{it} - \overline{\varepsilon}_i) = N(T-1) \sigma^2_{\varepsilon}$.

Now we turn to the estimation of $\sigma^2_{\eta}$.
The most efficient estimator of $(a, b_B)$, based on the model \eqref{BE_model},
is the OLS estimator $(\widetilde{a}, \widetilde{b}_B)$.  Let $\widetilde{r}_i$ denote the $i$'th residual corresponding to this estimator.
In other words, $\widetilde{r}_i = \overline{y}_i - (\widetilde{a} + \widetilde{b}_B \, \overline{x}_i) = (a - \widetilde{a}) + (b_B - \widetilde{b}_B) \, \overline{x}_i + \eta_i + \overline{\varepsilon}_i$.
Our estimator
\begin{equation}
\label{est_var_eta}
\widehat{\sigma}^2_{\eta} = N^{-1} \sum_{i=1}^N \widetilde{r}_i^2 - T^{-1} \, \widehat{\sigma}^2_{\varepsilon}.
\end{equation}
is motivated by the approximation $\widetilde{r}_i \approx \eta_i + \overline{\varepsilon}_i$
and the fact that
$\left( \eta_i+ \overline{\varepsilon}_i \right)$'s are independent, with
$E\left( \eta_i+ \overline{\varepsilon}_i \right) = 0$ and $\text{Var}\left(\eta_i+ \overline{\varepsilon}_i \right) = \sigma^2_{\eta} + T^{-1} \, \sigma^2_{\varepsilon}$. Our estimator of $\nu$ is $\widehat{\nu} = \widehat{\sigma}_{\eta}^2 / \widehat{\sigma}_{\varepsilon}^2$.

\section{The coverage probability of the confidence interval resulting from the two-stage procedure}
\label{cp_pretest_interval}

The coverage probability of the confidence interval $K(\widehat{\sigma}_{\varepsilon}, \widehat{\nu})$, conditional on $x$, is $P(b \in K(\widehat{\sigma}_{\varepsilon}, \widehat{\nu}) \, | \, x)$.  By the law of total probability, $P(b \in K(\widehat{\sigma}_{\varepsilon}, \widehat{\nu}) \, | \, x )$ is equal to
\begin{equation}
\label{cov_prob_unknown}
 P\left( b \in I(\widehat{\sigma}_{\varepsilon}, \widehat{\nu}), \, H(\widehat{\sigma}_{\varepsilon}, \widehat{\nu}) \leq z^2_{1-\alpha_H/2} \, | \, x \right) + P \left( b \in J(\widehat{\sigma}_{\varepsilon}), \, H(\widehat{\sigma}_{\varepsilon}, \widehat{\nu}) > z^2_{1-\alpha_H/2} \, | \, x \right).
\end{equation}
Let $\gamma = \xi \, N^{1/2} / \sigma_{\varepsilon}$, which is a
scaled version of the non-exogeneity parameter $\xi$.
The following two theorems give important properties of this coverage probability.  The proofs of these theorems are in the appendix.
\begin{theorem}
\label{thm_cov_depends}
For the estimators considered in Section \ref{EstimationVariances}, $P(b \in K(\widehat{\sigma}_{\varepsilon}, \widehat{\nu}) \, | \, x)$ is determined by $x$ (the time-varying covariate), $\alpha_H$ (the nominal significance level of the Hausman pretest), $1 - \alpha$ (the nominal coverage probability of $K(\widehat{\sigma}_{\varepsilon}, \widehat{\nu})$), $\nu$ (the ratio $\sigma^2_{\eta}/\sigma^2_{\varepsilon}$) and $\gamma$ (the scaled non-exogeneity parameter).  Given these quantities, the conditional coverage probability does not depend on $\sigma^2_{\varepsilon}$ (the variance of the random error) or $\sigma^2_{\eta}$ (the variance of the random effect).
\end{theorem}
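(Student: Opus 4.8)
The plan is to prove both assertions at once by a scaling argument. Fix $x$, $\alpha_H$ and $1-\alpha$, and let $c>0$ be arbitrary. Holding $\nu=\sigma_\eta^2/\sigma_\varepsilon^2$ and $\gamma=\xi N^{1/2}/\sigma_\varepsilon$ fixed while replacing $\sigma_\varepsilon$ by $c\,\sigma_\varepsilon$ forces $\sigma_\eta\mapsto c\,\sigma_\eta$ and $\xi\mapsto c\,\xi$, since $\sigma_\eta=\sigma_\varepsilon\sqrt{\nu}$ and $\xi=\gamma\,\sigma_\varepsilon N^{-1/2}$. I would show that this replacement leaves each of the two probabilities in \eqref{cov_prob_unknown} unchanged. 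Since, for fixed $(\gamma,\nu)$, every admissible triple $(\sigma_\varepsilon,\sigma_\eta,\xi)$ is obtained from a reference triple by such a scaling, the coverage probability is constant along this family; equivalently, given $x,\alpha_H,1-\alpha,\gamma,\nu$, it does not depend on $\sigma_\varepsilon^2$ or $\sigma_\eta^2$, which is the assertion. To expose the scaling I would introduce the standardized errors $\varepsilon_{it}^{\ast}=\varepsilon_{it}/\sigma_\varepsilon$ and $\eta_i^{\ast}=\eta_i/\sigma_\eta$, which are iid $N(0,1)$ with a joint law free of $\sigma_\varepsilon$ and $\sigma_\eta$; substituting $\varepsilon_{it}=\sigma_\varepsilon\,\varepsilon_{it}^{\ast}$ and $\eta_i=\sigma_\varepsilon\sqrt{\nu}\,\eta_i^{\ast}$ turns \eqref{CRE_model} into $y_{it}=a+b_W(x_{it}-\overline{x}_i)+b_B\,\overline{x}_i+\sigma_\varepsilon\big(\sqrt{\nu}\,\eta_i^{\ast}+\varepsilon_{it}^{\ast}\big)$.

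The crux of the argument is the scale invariance of $\widehat{\nu}$, because $\widehat{\nu}$ enters the GLS weights determining $\widehat{b}$ and every feasible conditional-variance formula, creating an apparent circularity. Here the estimators from \eqref{FE_model} and \eqref{BE_model} are decisive: $\widetilde{b}_W$ is OLS in \eqref{FE_model}, whose weights depend only on $x$, so $\widetilde{b}_W-b=\sigma_\varepsilon\,h_W$, and $\widetilde{b}_B$ is OLS in \eqref{BE_model}, giving $\widetilde{b}_B-b_B=\sigma_\varepsilon\,h_B$, where $h_W$ and $h_B$ depend only on $x,\nu$ and the standardized errors. Consequently the residuals $r_{it}=(b_W-\widetilde{b}_W)(x_{it}-\overline{x}_i)+(\varepsilon_{it}-\overline{\varepsilon}_i)$ and $\widetilde{r}_i$ are each $\sigma_\varepsilon$ times a function of $x,\nu$ and the standardized errors, so by \eqref{est_var_varepsilon} and \eqref{est_var_eta} both $\widehat{\sigma}_\varepsilon^2$ and $\widehat{\sigma}_\eta^2$ equal $\sigma_\varepsilon^2$ times such a function. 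Hence the common factor $\sigma_\varepsilon^2$ cancels in the ratio $\widehat{\nu}=\widehat{\sigma}_\eta^2/\widehat{\sigma}_\varepsilon^2$, and $\widehat{\nu}$ depends only on $x,\nu,\gamma$ and the standardized errors.

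With $\widehat{\nu}$ shown to be scale invariant, I would complete the separation of the remaining quantities. Writing $\widehat{b}=\sum w_{it}(x,\widehat{\nu})\,y_{it}$, where the GLS slope weights satisfy $\sum w_{it}=0$ and $\sum w_{it}x_{it}=1$, and using $b_B-b_W=\xi=\gamma\,\sigma_\varepsilon N^{-1/2}$, one obtains $\widehat{b}-b=\xi\sum w_{it}\overline{x}_i+\sigma_\varepsilon\sum w_{it}\big(\sqrt{\nu}\,\eta_i^{\ast}+\varepsilon_{it}^{\ast}\big)=\sigma_\varepsilon\,h$, with $h$ depending only on $x,\gamma,\nu$ and the standardized errors. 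Each feasible conditional variance is $\widehat{\sigma}_\varepsilon^2$ times a function of $x$ and $\widehat{\nu}$, hence equals $\sigma_\varepsilon^2$ times a scale-invariant factor, so the half-widths of $I(\widehat{\sigma}_\varepsilon,\widehat{\nu})$ and $J(\widehat{\sigma}_\varepsilon)$ are $\sigma_\varepsilon$ times scale-invariant factors, and $H(\widehat{\sigma}_\varepsilon,\widehat{\nu})$, being a ratio of two $\sigma_\varepsilon^2$ factors, is itself scale invariant. Dividing the inequalities defining the events $\{b\in I(\widehat{\sigma}_\varepsilon,\widehat{\nu})\}$, $\{b\in J(\widehat{\sigma}_\varepsilon)\}$ and the Hausman events $\{H(\widehat{\sigma}_\varepsilon,\widehat{\nu})\le z^2_{1-\alpha_H/2}\}$ and its complement through by the common factor $\sigma_\varepsilon$ (or $\sigma_\varepsilon^2$) shows that each indicator in \eqref{cov_prob_unknown} is a function of $x,\alpha,\alpha_H,\gamma,\nu$ and the standardized errors alone. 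Integrating over the $N(0,1)$ law of $(\varepsilon^{\ast},\eta^{\ast})$, which does not involve $\sigma_\varepsilon$ or $\sigma_\eta$, yields the claim.

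I expect the main obstacle to be the apparent circularity noted above: $\widehat{\nu}$ feeds into both the point estimator $\widehat{b}$ and every variance estimate, so the pieces cannot be treated independently. The resolution is to prove the scale invariance of $\widehat{\nu}$ first, in isolation from $\widehat{b}$ and the variance formulas, after which the separation of each remaining quantity into $\sigma_\varepsilon$ (or $\sigma_\varepsilon^2$) times a factor depending only on $x,\gamma,\nu$ and the standardized errors follows routinely. A secondary point requiring care is verifying that the GLS weights $w_{it}(x,\widehat{\nu})$ indeed satisfy $\sum w_{it}=0$ and $\sum w_{it}x_{it}=1$, which is what produces the clean bias term $\xi\sum w_{it}\overline{x}_i$ in $\widehat{b}-b$.
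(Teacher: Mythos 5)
Your proposal is correct and follows essentially the same route as the paper: standardize the errors to iid $N(0,1)$ variables, show that $\widehat{\nu}$ is scale invariant and that every statistic entering the two events in \eqref{cov_prob_unknown} is $\sigma_{\varepsilon}$ (or $\sigma_{\varepsilon}^2$) times a quantity depending only on $x$, $\gamma$, $\nu$ and the standardized errors, so the factor cancels from each indicator. The only cosmetic difference is that you argue via generic GLS weight identities where the paper substitutes the explicit expressions for $\widehat{g}_I$, $\widehat{g}_J$ and $\widehat{h}$ from its Lemma 1 (obtained via Maddala's equality).
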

\begin{theorem}
\label{thm_cov_even}
Suppose that $x$, $\alpha_H$, $1-\alpha$ and $\nu$ are fixed.  For the estimators considered in Section \ref{EstimationVariances}, the conditional coverage probability is an even function of $\gamma$.
\end{theorem}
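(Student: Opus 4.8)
The plan is to exploit a reflection symmetry of the error vector. Since $\gamma = \xi \, N^{1/2}/\sigma_\varepsilon$ and, by Theorem \ref{thm_cov_depends}, the conditional coverage probability depends on $(\xi, \sigma_\varepsilon, \sigma_\eta)$ only through $(\gamma, \nu)$ once $x$, $\alpha_H$ and $1-\alpha$ are fixed, it suffices to hold $\sigma_\varepsilon$ and $\sigma_\eta$ (hence $\nu$) fixed and show that the coverage probability is invariant under $\xi \mapsto -\xi$, which sends $\gamma \mapsto -\gamma$. I would establish this invariance using the measure-preserving map that flips the sign of every error, $\eta_i \mapsto -\eta_i$ and $\varepsilon_{it} \mapsto -\varepsilon_{it}$; because the $\eta_i$ and $\varepsilon_{it}$ are independent and symmetric about $0$, this map preserves their joint law conditional on $x$.

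The key step is to classify each ingredient of the coverage event by its parity under this reflection. Writing $U = \widetilde{b}_W - b_W$ and $V = \widetilde{b}_B - b_B$, the representations in \eqref{CRE_model}--\eqref{FE_model} show that $U$ is a linear function of the $\varepsilon_{it}$ and $V$ is a linear function of the $\eta_i + \overline\varepsilon_i$; in particular $U$ and $V$ are odd under the reflection and do not depend on $a$, $b$ or $\xi$. Substituting the OLS residuals into \eqref{est_var_varepsilon} and \eqref{est_var_eta} gives $r_{it} = -U\,(x_{it}-\overline x_i) + (\varepsilon_{it}-\overline\varepsilon_i)$ and $\widetilde r_i = -(\widetilde a - a) - V\,\overline x_i + \eta_i + \overline\varepsilon_i$, each odd under the reflection, so $\widehat\sigma_\varepsilon^2$, $\widehat\sigma_\eta^2$ and $\widehat\nu$, being quadratic in these residuals, are even and free of $a$, $b$, $\xi$. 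Consequently the estimated half-widths of $I(\widehat\sigma_\varepsilon,\widehat\nu)$ and $J(\widehat\sigma_\varepsilon)$, the denominator of $H(\widehat\sigma_\varepsilon,\widehat\nu)$, and the GLS weight $w$ in the representation $\widehat b = w\,\widetilde b_W + (1-w)\,\widetilde b_B$ are all even functions of the errors.

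With this bookkeeping the two events in \eqref{cov_prob_unknown} become transparent, using $b_W = b$ and $b_B = b + \xi$: the event $b \in J(\widehat\sigma_\varepsilon)$ is $|U| \le h_J$ (independent of $\xi$); the event $H(\widehat\sigma_\varepsilon,\widehat\nu) \le z^2_{1-\alpha_H/2}$ is $|(U-V)-\xi| \le c$; and the event $b \in I(\widehat\sigma_\varepsilon,\widehat\nu)$ is $|wU+(1-w)V+(1-w)\xi| \le h_I$, where $h_J$, $c$ and $h_I$ are even. Applying the reflection sends $(U,V)\mapsto(-U,-V)$ while fixing $h_J$, $c$, $h_I$ and $w$, so each event at parameter value $\xi$ is carried exactly onto the corresponding event at $-\xi$; since the reflection preserves the conditional law of the errors, each of the two probabilities in \eqref{cov_prob_unknown}, and hence their sum, is unchanged under $\xi \mapsto -\xi$. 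I expect the main obstacle to be the bookkeeping underlying this parity classification --- in particular verifying the weighted representation of $\widehat b$ with an error-even weight and confirming that its estimated conditional variance is even in the errors --- rather than the symmetry argument itself, which is then immediate.
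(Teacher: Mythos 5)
Your proof is correct and uses essentially the same argument as the paper: the sign-flip $(\varepsilon,\eta)\mapsto(-\varepsilon,-\eta)$ is measure-preserving, the variance estimators (hence $\widehat{\nu}$, the weights and the half-widths) are even under it while the location terms are odd, so combining the reflection with $\xi\mapsto-\xi$ (i.e.\ $\gamma\mapsto-\gamma$) maps each coverage/pretest event onto its counterpart at $-\gamma$. The paper packages the same parity bookkeeping through the explicit expressions for $\widehat{g}_I$, $\widehat{g}_J$ and $\widehat{h}$ in its Lemma~1, whereas you carry it out directly on $U=\widetilde{b}_W-b_W$, $V=\widetilde{b}_B-b_B$ and the residuals; the substance is identical.
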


\subsection{Assessment of $\boldsymbol{CP(\gamma, \nu)}$}

Suppose that $\alpha_H$ and $1-\alpha$ are given.
Let $CP(\gamma, \nu) = P(b \in K(\widehat{\sigma}_{\varepsilon}, \widehat{\nu}) \, | \, x)$,
the coverage probability of $K(\widehat{\sigma}_{\varepsilon}, \widehat{\nu})$, conditional on $x$.
In this section, we ask the question: How do we assess
$CP(\gamma, \nu)$?
A commonly used assessment of this coverage probability is the {\sl confidence coefficient}, which
is the infimum over $g$ and $n$ of $CP(g, n)$.
Irrespective of the values of $\gamma$ and $\nu$,
$CP(\gamma, \nu)$ is bounded below by the \textsl{confidence coefficient}.
We illustrate this assessment using the {\sl airfare} data.
For the \textsl{airfare} data, the minimum over $g$ of $CP(g, n)$ is an increasing function of $n$.
For this data, the \textsl{confidence coefficient} is approximately 0.19.

However, this assessment is excessively conservative, since the data provides information about
$\nu$ through the estimator $\widehat{\nu}$. For any given $x$, we can assess how much $CP(\gamma, \widehat{\nu})$
differs from $CP(\gamma, \nu)$ for a range of values of
$\gamma$ and $\nu$. This has been done for the {\sl airfare} data in Figure \ref{CP_pdf_gamtrue_nuhat}
by plotting the density function of
$CP(\gamma, \widehat{\nu})$, estimated by the simulation method described later in Section \ref{sec_est_pdfs_CP}.
What this figure shows us is that $CP(\gamma, \widehat{\nu})$ is unlikely to differ greatly from $CP(\gamma, \nu)$.
This suggests that we can provide a useful assessment of $CP(\gamma, \nu)$
by finding an equi-tailed confidence interval for the minimum over $g$ of $CP(g, \nu)$.
This confidence interval is constructed from the equi-tailed confidence interval for $\nu$
described in the next section. An attractive feature of this assessment is if (hypothetically)
the confidence interval for $\nu$ is $(0, \infty)$ then this assessment reduces to the confidence coefficient.

\begin{figure}
\centering
\includegraphics[scale = 0.9]{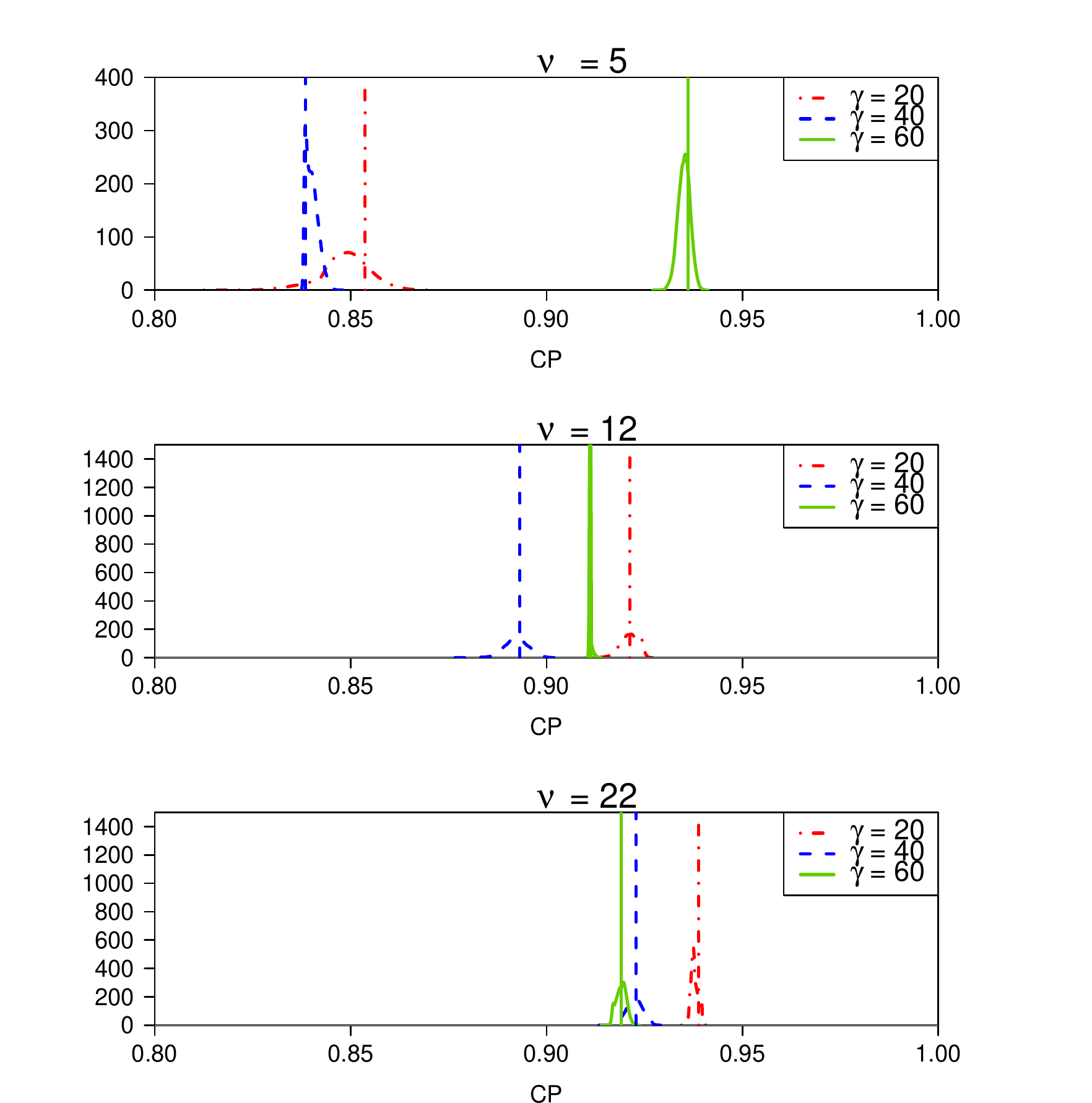}
\caption{For the {\sl airfare} data, plots of the density function, conditional on $x$, of
$CP(\gamma, \widehat{\nu})$, estimated by simulation, for $\nu \in \{ 5, 12, 22 \}$
and $\gamma \in \{20, 40, 60 \}$. The vertical lines have horizontal axis intercepts at $CP(\gamma, \nu)$.}
\label{CP_pdf_gamtrue_nuhat}
\end{figure}

\subsection{An equi-tailed confidence interval for $\boldsymbol{\nu}$}
\label{sec_conf_int_nu}

  Using \eqref{est_var_varepsilon} and \eqref{est_var_eta}, it can be shown that
\begin{align}
\label{nu_hat_over_nu}
\frac{\widehat{\nu} + T^{-1}}{\nu + T^{-1}} = \frac{\dfrac{ N^{-1} \sum_{i=1}^N \widetilde{r}_i^2}{\sigma^2_{\varepsilon}(\nu + T^{-1})}}{ \dfrac{\left(N(T-1)\right)^{-1} \sum_{i=1}^N \sum_{t=1}^T r_{it}^2}{\sigma^2_{\varepsilon}}}.
\end{align}
The following theorem allows us to easily compute quantiles of the distribution of $(\widehat{\nu} + T^{-1}) / (\nu + T^{-1})$ by simulation.  This theorem is proved in the appendix.
\begin{theorem}
\label{thm_pivot}
Conditional on $x$,
the distribution of $\left(\widehat{\nu} + T^{-1}\right) / \left( \nu + T^{-1} \right)$ does not depend on any unknown parameters, i.e. $\left(\widehat{\nu} + T^{-1}\right) / \left( \nu + T^{-1} \right)$ is a pivotal quantity.
\end{theorem}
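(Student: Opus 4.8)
The plan is to exploit the representation in \eqref{nu_hat_over_nu}, which expresses the quantity of interest as a ratio whose numerator is $M = \bigl(\sigma_\varepsilon^2(\nu + T^{-1})\bigr)^{-1} N^{-1}\sum_{i=1}^N \widetilde{r}_i^2$ and whose denominator is $D = \sigma_\varepsilon^{-2}\,(N(T-1))^{-1}\sum_{i=1}^N\sum_{t=1}^T r_{it}^2$. It suffices to show that, conditional on $x$, the joint distribution of $(M, D)$ does not depend on any unknown parameter; the distribution of the ratio $M/D$ then inherits this property. I would establish this in three steps: a standardization of $D$, a standardization of $M$, and an independence argument linking the two.

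First I would treat the denominator. Since $\widetilde{b}_W$ is the OLS estimator of $b_W$ in the fixed effects model \eqref{FE_model}, whose error term is $\varepsilon_{it} - \overline{\varepsilon}_i$, the residual $r_{it} = (b_W - \widetilde{b}_W)(x_{it} - \overline{x}_i) + (\varepsilon_{it} - \overline{\varepsilon}_i)$ is a linear function---with coefficients depending only on $x$---of the within-group deviations $\{\varepsilon_{js} - \overline{\varepsilon}_j\}$. Writing $u_{it} = \varepsilon_{it}/\sigma_\varepsilon$, so that $(\varepsilon_{it} - \overline{\varepsilon}_i)/\sigma_\varepsilon = u_{it} - \overline{u}_i$, it follows that $D = \sigma_\varepsilon^{-2}(N(T-1))^{-1}\sum_i\sum_t r_{it}^2$ is a fixed function of $x$ and the $\{u_{it} - \overline{u}_i\}$. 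Because the $u_{it}$ are iid $N(0,1)$, the conditional (on $x$) distribution of $D$ carries no unknown parameter. Next I would treat the numerator in the same spirit: since $(\widetilde{a}, \widetilde{b}_B)$ is the OLS estimator in the between model \eqref{BE_model}, whose error term is $\eta_i + \overline{\varepsilon}_i$, the residual $\widetilde{r}_i$ is a linear function---with coefficients depending only on $x$---of the between-group errors $\{\eta_j + \overline{\varepsilon}_j\}$. These errors are independent across $i$ and normal with mean $0$ and variance $\sigma_\eta^2 + T^{-1}\sigma_\varepsilon^2 = \sigma_\varepsilon^2(\nu + T^{-1})$, so the standardized errors $w_i = (\eta_i + \overline{\varepsilon}_i)/(\sigma_\varepsilon^2(\nu + T^{-1}))^{1/2}$ are iid $N(0,1)$. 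Hence $M$ is a fixed function of $x$ and the $\{w_i\}$, and its conditional distribution carries no unknown parameter.

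The step I expect to be the crux is to show that $M$ and $D$ are independent conditional on $x$, for otherwise the joint distribution of $(M,D)$ could still depend on $\nu$ through the correlation between the $\{w_i\}$ and the $\{u_{it} - \overline{u}_i\}$. Indeed, $w_i$ involves $\overline{\varepsilon}_i$ and so correlates with the $u_{it}$ in a $\nu$-dependent way, so a naive ``function of standard normals'' argument applied to $(u,w)$ jointly is not enough. The required independence follows from two facts: the $\eta_i$ are independent of all the $\varepsilon_{it}$; and, for a normal sample, the within-group deviations $\{\varepsilon_{it} - \overline{\varepsilon}_i\}_t$ are independent of the group mean $\overline{\varepsilon}_i$ (the classical independence of sample deviations and sample mean, applied within each $i$, and extended across $i$ by the independence of the groups). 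Consequently $\{\varepsilon_{it} - \overline{\varepsilon}_i\}$ is independent of $\{\eta_i + \overline{\varepsilon}_i\}$, so $D$ (a function of the former) is independent of $M$ (a function of the latter). The conditional joint distribution of $(M,D)$ therefore factorizes into two distributions, each free of unknown parameters, and the pivotality of $M/D = (\widehat{\nu} + T^{-1})/(\nu + T^{-1})$ follows.
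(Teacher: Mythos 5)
Your proof is correct and follows essentially the same route as the paper: the paper's Lemma 2 standardizes exactly the same two blocks, $\vartheta_{it}^{\dag}=(\varepsilon_{it}-\overline{\varepsilon}_i)/\sigma_{\varepsilon}$ and $\varphi_i^{\dag}=(\eta_i+\overline{\varepsilon}_i)/\bigl(\sigma_{\varepsilon}^2(\nu+T^{-1})\bigr)^{1/2}$, and shows their joint normal distribution has a parameter-free covariance matrix, the key entry being $\mathrm{Cov}(\varphi_i^{\dag},\vartheta_{it}^{\dag})=0$, which is precisely the cross-block orthogonality you package as independence of within-group deviations from $\eta_i+\overline{\varepsilon}_i$. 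The only cosmetic difference is that you phrase the crux as independence of numerator and denominator while the paper phrases it as a parameter-free joint distribution of the standardized errors; both rest on the same computation.
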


Define $F_{\overline{\alpha}/2}$ and $F_{1 - \overline{\alpha}/2}$ to be the $\overline{\alpha}/2$ and $1 - \overline{\alpha}/2$ quantiles, respectively, of the distribution of the pivotal quantity $(\widehat{\nu} + T^{-1}) / (\nu + T^{-1})$.  Then an equi-tailed confidence interval for $\nu$ with coverage probability $1 - \overline{\alpha}$ is
\begin{equation}
\label{conf_int_nu}
 \left[  \frac{ \widehat{\nu} + T^{-1}}{F_{1-\overline{\alpha}/2}} - T^{-1} , \, \frac{\widehat{\nu} + T^{-1}}{F_{\overline{\alpha}/2}} - T^{-1} \right].
\end{equation}
We find $F_{\overline{\alpha}/2}$ and $F_{1 - \overline{\alpha}/2}$  by the simulation method described in detail in Section \ref{sec_est_ci_nu}.  For the \textsl{airfare} data described in the introduction, this 98\% equi-tailed confidence interval for $\nu$ is $[11.3976, 14.3829]$.

\bigskip

\subsection{Why don't we use information about $\boldsymbol{\gamma}$ provided by $\boldsymbol{\widehat{\gamma}}$?}

An estimator of $\gamma$ is
\begin{equation}
\label{eqn_gam_hat}
\widehat{\gamma} =  \frac{(\widetilde{b}_B - \widetilde{b}_W) \, N^{1/2}}{\widehat{\sigma}_{\varepsilon}}.
\end{equation}
The data provides information about $\gamma$ through the estimator $\widehat{\gamma}$.
For any given $x$, we can assess how much $CP(\widehat{\gamma}, \nu)$
differs from $CP(\gamma, \nu)$ for a range of values of
$\gamma$ and $\nu$. This has been done for the {\sl airfare} data in Figure \ref{CP_pdf_gamhat_nutrue}
by plotting the probability density function of
$CP(\widehat{\gamma}, \nu)$, estimated by the simulation method described later in Section \ref{sec_est_pdfs_CP}.
What this figure shows us is that $CP(\widehat{\gamma}, \nu)$ differs greatly from $CP(\gamma, \nu)$.
Therefore, we treat the parameter $\gamma$ differently from the parameter $\nu$.

\begin{figure}
\centering
\includegraphics[scale = 0.9]{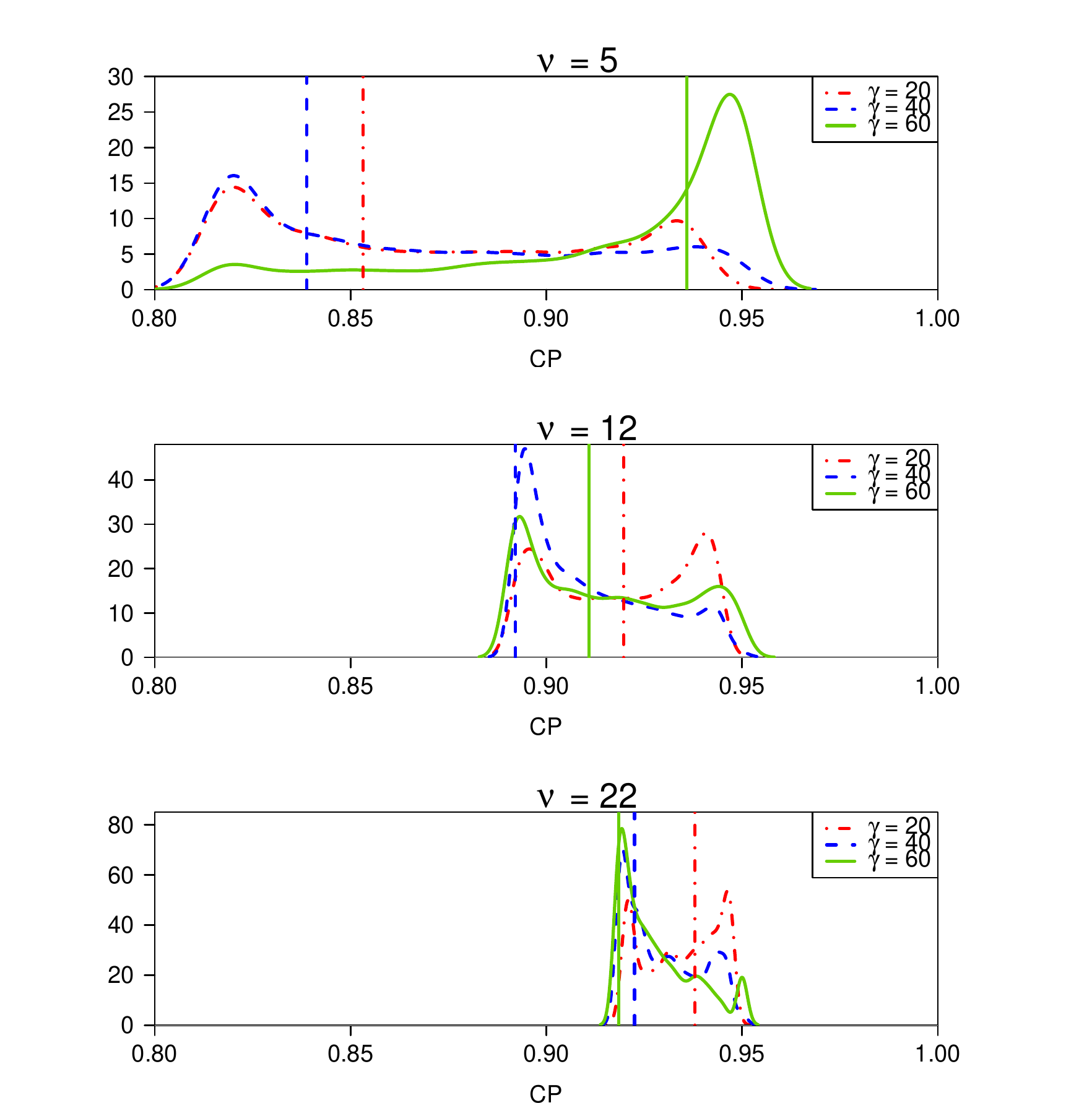}
\caption{For the {\sl airfare} data, plots of the density function, conditional on $x$, of
$CP(\widehat{\gamma}, \nu)$, estimated by simulation, for $\nu \in \{ 5, 12, 22 \}$
and $\gamma \in \{20, 40, 60 \}$.
The vertical lines have horizontal axis intercepts at $CP(\gamma, \nu)$.
}
\label{CP_pdf_gamhat_nutrue}
\end{figure}

\section{Definition of the conditional scaled expected length}
\label{sel_pretest_interval}

For any $c \in [1/2, 1)$, let
\begin{equation*}
J_c(\sigma_{\varepsilon}) = \left[ \widetilde{b}_W - \Phi^{-1}((c+1)/2)\left( \text{Var}(\widetilde{b}_W \, | \, x) \right)^{1/2},  \, \widetilde{b}_W + \Phi^{-1}((c+1)/2)\left( \text{Var}(\widetilde{b}_W \, | \, x) \right)^{1/2} \right].
\end{equation*}
Note that, for any given $c$, the confidence interval $J_c(\widehat{\sigma}_{\varepsilon})$ has coverage probability that does not depend on any unknown parameters.
This is the standard confidence interval against which we compare $K(\widehat{\sigma}_{\varepsilon}, \widehat{\nu})$, in terms of
expected length, conditional on $x$.
As observed in Section 2, this interval may be constructed using the fixed effects model.

The usual definition of conditional scaled expected length of $K(\widehat{\sigma}_{\varepsilon}, \widehat{\nu})$ is as follows.
Define $c_{\text{min}}$ to be the value of $c$ such that
$P(b \in J_c(\widehat{\sigma}_{\varepsilon})) = \inf_n \min_{g} CP(g, n)$. Then define this scaled expected length
to be the expected length of $K(\widehat{\sigma}_{\varepsilon}, \widehat{\nu})$ divided by the expected length of $J_{c_{\text{min}}}(\widehat{\sigma}_{\varepsilon})$, conditional on $x$. In other words, we compare the expected length of $K(\widehat{\sigma}_{\varepsilon}, \widehat{\nu})$ with the expected length of the standard confidence interval with the same {\sl confidence coefficient}, conditional on $x$.

However, as noted in the introduction, the \textsl{confidence coefficient} for the \textsl{airline} data is an excessively conservative
assessment of $CP(\gamma, \nu)$. We therefore
introduce the following alternative definition of conditional scaled expected length.
Let $[\nu_{\ell}, \nu_{u}]$ be the equi-tailed confidence interval \eqref{conf_int_nu} for $\nu$ with coverage probability $1 - \overline{\alpha}$.
Define $c^*$ to be the value of $c$ such that
$P(b \in J_c(\widehat{\sigma}_{\varepsilon})) = \min_{n \in [\nu_{\ell}, \nu_{u}]} \min_{g} CP(g, n)$.
The conditional scaled expected length of $K(\widehat{\sigma}_{\varepsilon}, \widehat{\nu})$ is defined to be the expected length of $K(\widehat{\sigma}_{\varepsilon}, \widehat{\nu})$ divided by the expected length of $J_{c^*}(\widehat{\sigma}_{\varepsilon})$,
conditional on $x$. An attractive feature of this definition is that if (hypothetically)
the equi-tailed confidence interval for $\nu$ with coverage probability $1 - \overline{\alpha}$ is $(0, \infty)$ then $c^*$ is equal to $c_{\text{min}}$, and this definition of the scaled expected length reduces to the usual definition.
Let $\overline{x} = N^{-1} \sum_{i=1}^N \overline{x}_i$, $\SSB = \sum_{i=1}^N (\overline{x}_i - \overline{x})^2$, $\SSW = \sum_{i=1}^N \sum_{t=1}^T (x_{it} - \overline{x}_i)^2$, $r(x) = \SSB / \SSW$, $q(\widehat{\nu}, T) = \widehat{\nu} + T^{-1}$, $\widehat{w} = q(\widehat{\nu}, T) / \left( q(\widehat{\nu}, T) + r(x) \right)$ and define the event
${\cal H} = \big\{ H(\widehat{\sigma}_{\varepsilon}, \widehat{\nu}) \leq z^2_{1 - \alpha_H/2} \big \}$.
For any statement $\mathcal{A}$, we use the notation
\begin{equation*}
\mathcal{I}(\mathcal{A}) = \begin{cases} 1 \text{ if $\mathcal{A}$ is true} \\ 0 \text{ if $\mathcal{A}$ is false} \end{cases}.
\end{equation*}
The following theorem gives a convenient expression for this alternative definition of the conditional scaled expected length of
$K(\widehat{\sigma}_{\varepsilon}, \widehat{\nu})$.  This theorem is proved in the appendix.

\begin{theorem}
\label{thm_sel}
For $\gamma \in \mathbb{R}$ and $\nu \in [\nu_l, \nu_u]$, the scaled expected length, conditional on $x$, is equal to
\begin{equation}
\label{scaled_expected_length}
\frac{ z_{1-\alpha/2}}{\Phi^{-1}\left( (c^* + 1) / 2 \right)} \frac{ E \left( \left( \widehat{\sigma}_{\varepsilon} / \sigma_{\varepsilon} \right) \left( \widehat{w}^{1/2} \, {\cal I}( {\cal H}) + {\cal I}( {\cal H}^c) \right)\, | \, x \right)}{E\left( \widehat{\sigma}_{\varepsilon} / \sigma_{\varepsilon} \right)}.
\end{equation}
\end{theorem}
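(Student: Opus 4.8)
The plan is to reduce the ratio of expected lengths to \eqref{scaled_expected_length} by writing out the lengths of the three intervals explicitly and cancelling the factors common to numerator and denominator. First I would use the definition of the two-stage procedure: on the event $\mathcal{H}$ the interval $K(\widehat{\sigma}_{\varepsilon}, \widehat{\nu})$ coincides with $I(\widehat{\sigma}_{\varepsilon}, \widehat{\nu})$, and on $\mathcal{H}^c$ it coincides with $J(\widehat{\sigma}_{\varepsilon})$. Hence its length is $2 z_{1-\alpha/2}\big((\text{Var}(\widehat{b}\,|\,x))^{1/2}\,\mathcal{I}(\mathcal{H}) + (\text{Var}(\widetilde{b}_W\,|\,x))^{1/2}\,\mathcal{I}(\mathcal{H}^c)\big)$, the two conditional variances being evaluated at the estimators $\widehat{\sigma}_{\varepsilon},\widehat{\nu}$; note that only the half-widths matter, so the feasible centering of $I$ is irrelevant. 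The standard interval $J_{c^*}(\widehat{\sigma}_{\varepsilon})$ has length $2\,\Phi^{-1}((c^*+1)/2)\,(\text{Var}(\widetilde{b}_W\,|\,x))^{1/2}$, again at $\widehat{\sigma}_{\varepsilon}$, and the scaled expected length is the ratio of the conditional expectations of these two lengths.

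The substantive step, which I expect to be the main obstacle, is obtaining closed forms for the two conditional variances and showing the GLS variance collapses to the advertised form. For $\widetilde{b}_W$, since $\sum_t (x_{it}-\overline{x}_i)(\varepsilon_{it}-\overline{\varepsilon}_i) = \sum_t (x_{it}-\overline{x}_i)\varepsilon_{it}$ and the $\varepsilon_{it}$ are iid, the fixed-effects OLS estimator gives $\text{Var}(\widetilde{b}_W\,|\,x) = \sigma_{\varepsilon}^2/\SSW$. For $\widehat{b}$ I would exploit that, conditional on $x$, the within errors $\varepsilon_{it}-\overline{\varepsilon}_i$ and the between errors $\eta_i+\overline{\varepsilon}_i$ have zero covariance and hence, by joint normality, are independent; consequently $\widetilde{b}_W$ and $\widetilde{b}_B$ are independent with $\text{Var}(\widetilde{b}_B\,|\,x) = \sigma_{\varepsilon}^2\,q(\nu,T)/\SSB$. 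Since GLS is the best linear unbiased estimator, $\widehat{b}$ coincides with the inverse-variance-weighted combination of the two independent estimators, so $\text{Var}(\widehat{b}\,|\,x) = \big(\SSW/\sigma_{\varepsilon}^2 + \SSB/(\sigma_{\varepsilon}^2\,q(\nu,T))\big)^{-1}$. Using $\SSB = r(x)\,\SSW$ this simplifies to $(\sigma_{\varepsilon}^2/\SSW)\,q(\nu,T)/(q(\nu,T)+r(x))$, which on evaluating at $\widehat{\nu}$ is exactly $(\widehat{\sigma}_{\varepsilon}^2/\SSW)\,\widehat{w}$.

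Finally I would assemble the ratio. Substituting the two variances, the length of $K(\widehat{\sigma}_{\varepsilon},\widehat{\nu})$ becomes $2 z_{1-\alpha/2}\,(\widehat{\sigma}_{\varepsilon}/\SSW^{1/2})\big(\widehat{w}^{1/2}\,\mathcal{I}(\mathcal{H}) + \mathcal{I}(\mathcal{H}^c)\big)$ and the length of $J_{c^*}(\widehat{\sigma}_{\varepsilon})$ becomes $2\,\Phi^{-1}((c^*+1)/2)\,\widehat{\sigma}_{\varepsilon}/\SSW^{1/2}$. Taking conditional expectations and forming the ratio, the factors $2$ and $\SSW^{-1/2}$ cancel, leaving $\{z_{1-\alpha/2}/\Phi^{-1}((c^*+1)/2)\}\cdot E\big(\widehat{\sigma}_{\varepsilon}(\widehat{w}^{1/2}\mathcal{I}(\mathcal{H})+\mathcal{I}(\mathcal{H}^c))\,|\,x\big)/E(\widehat{\sigma}_{\varepsilon}\,|\,x)$. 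Dividing numerator and denominator by the positive constant $\sigma_{\varepsilon}$ turns each $\widehat{\sigma}_{\varepsilon}$ into $\widehat{\sigma}_{\varepsilon}/\sigma_{\varepsilon}$ and yields \eqref{scaled_expected_length}. The denominator may be written free of the conditioning because, by the same reasoning used for Theorem \ref{thm_pivot}, $\widehat{\sigma}_{\varepsilon}^2/\sigma_{\varepsilon}^2$ from \eqref{est_var_varepsilon} is a scaled $\chi^2_{N(T-1)-1}$ variate whose distribution depends on neither $x$ nor the unknown parameters.
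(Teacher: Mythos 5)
Your proposal is correct and follows essentially the same route as the paper's proof: express the length of $K(\widehat{\sigma}_{\varepsilon},\widehat{\nu})$ as $2\,z_{1-\alpha/2}\,\widehat{\sigma}_{\varepsilon}\,\SSW^{-1/2}\bigl(\widehat{w}^{1/2}\,\mathcal{I}(\mathcal{H})+\mathcal{I}(\mathcal{H}^c)\bigr)$, divide by the length of $J_{c^*}(\widehat{\sigma}_{\varepsilon})$, take conditional expectations and cancel. The only difference is that you supply the variance computations (via Maddala's weighted-combination identity and the independence of $\widetilde{b}_W$ and $\widetilde{b}_B$) that the paper dispatches with ``it can be shown,'' and you add a correct justification for dropping the conditioning on $x$ in the denominator.
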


The following two theorems give important properties of this scaled expected length.  These theorems are proved in the appendix.

\begin{theorem}
\label{thm_sel_depends}
For the estimators described in Section \ref{EstimationVariances} the scaled expected length, conditional on $x$, is determined by $x$ (the time-varying covariate), $\alpha_H$ (the nominal significance level of the Hausman pretest), $1 - \alpha$ (the nominal coverage probability of $K(\widehat{\sigma}_{\varepsilon}, \widehat{\nu})$), $1 - \overline{\alpha}$ (the coverage probability of the confidence interval for $\nu$), $\nu$ (the ratio $\sigma^2_{\eta} / \sigma^2_{\varepsilon}$) and $\gamma$ (the scaled non-exogeneity parameter).  Given these quantities, the conditional scaled expected length does not depend on $\sigma^2_{\varepsilon}$ (the variance of the random error) or $\sigma^2_{\eta}$ (the variance of the random effect).
\end{theorem}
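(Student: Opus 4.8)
The plan is to start from the explicit expression \eqref{scaled_expected_length} for the conditional scaled expected length given in Theorem \ref{thm_sel}, and to show that each of its constituent pieces is determined by the quantities listed in the statement, the essential point being that the scale $\sigma_\varepsilon^2$ cancels throughout. This mirrors the argument for the conditional coverage probability in Theorem \ref{thm_cov_depends}; indeed, most of the distributional bookkeeping needed here has already been carried out in the proofs of Theorems \ref{thm_cov_depends} and \ref{thm_pivot}, so the task is largely one of reduction to those results.

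First I would treat the leading factor $z_{1-\alpha/2}/\Phi^{-1}((c^*+1)/2)$. Its numerator depends only on $\alpha$. For the denominator, recall that $c^*$ is defined by $P(b \in J_c(\widehat{\sigma}_\varepsilon)) = \min_{n \in [\nu_\ell, \nu_u]} \min_g CP(g,n)$, where the interval $[\nu_\ell,\nu_u]$ given by \eqref{conf_int_nu} is a fixed input of the problem (it enters \eqref{scaled_expected_length} only through the constant $c^*$, outside the expectation). As noted in Section \ref{sel_pretest_interval}, $P(b \in J_c(\widehat{\sigma}_\varepsilon))$ has a distribution free of unknown parameters and is thus a known function of $c$ and $x$. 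By Theorem \ref{thm_cov_depends}, $CP(g,n)$ depends only on $x$, $\alpha_H$, $1-\alpha$, $n$ and $g$. Hence the defining equation, and therefore $c^*$, is determined by $x$, $\alpha_H$, $1-\alpha$ and the fixed interval $[\nu_\ell,\nu_u]$ (which the choice of $1-\overline{\alpha}$ enters); in particular $c^*$ is a constant not depending on the true values of $\sigma_\varepsilon^2$ or $\sigma_\eta^2$.

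The heart of the proof is the ratio of expectations in \eqref{scaled_expected_length}, for which I would show that the joint conditional distribution, given $x$, of the triple $\big(\widehat{\sigma}_\varepsilon/\sigma_\varepsilon,\, \widehat{\nu},\, H(\widehat{\sigma}_\varepsilon,\widehat{\nu})\big)$ depends on the unknowns only through $\nu$ and $\gamma$. Writing $u_{it} = \varepsilon_{it}/\sigma_\varepsilon$ and $v_i = \eta_i/\sigma_\varepsilon$, so that the $u_{it}$ are iid $N(0,1)$ and the $v_i$ are iid $N(0,\nu)$, the fixed-effects residuals satisfy that $r_{it}/\sigma_\varepsilon$ is a function of the $u_{it}$ alone, whence $\widehat{\sigma}_\varepsilon/\sigma_\varepsilon$ has a distribution free of unknown parameters; the between residuals satisfy that $\widetilde{r}_i/\sigma_\varepsilon$ is a function of the $v_i + \overline{u}_i$, so that $\widehat{\nu}$ is a function of the $u_{it}$ and $v_i$ and, consistently with Theorem \ref{thm_pivot}, has conditional distribution depending only on $\nu$. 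For the Hausman statistic, $N^{1/2}(\widetilde{b}_W - \widetilde{b}_B)/\sigma_\varepsilon = -\gamma + (\text{a function of the } u_{it} \text{ and } v_i)$, while its estimated denominator equals $\widehat{\sigma}_\varepsilon^2$ times a function of $x$ and $\widehat{\nu}$; the factors of $\sigma_\varepsilon^2$ cancel, so that $H(\widehat{\sigma}_\varepsilon,\widehat{\nu})$ is a function of the $u_{it}$, $v_i$, $\gamma$ and $x$. Since the joint law of the $u_{it}$ and $v_i$ given $x$ depends only on $\nu$, the claimed reduction of the joint distribution of the triple follows.

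Granting this, the conclusion is immediate: $\widehat{w} = q(\widehat{\nu},T)/\big(q(\widehat{\nu},T) + r(x)\big)$ is a function of $\widehat{\nu}$ and $x$, and $\mathcal{I}(\mathcal{H})$ is a function of $H(\widehat{\sigma}_\varepsilon,\widehat{\nu})$ and $\alpha_H$, so both the numerator $E\big((\widehat{\sigma}_\varepsilon/\sigma_\varepsilon)(\widehat{w}^{1/2}\mathcal{I}(\mathcal{H}) + \mathcal{I}(\mathcal{H}^c)) \,|\, x\big)$ and the denominator $E(\widehat{\sigma}_\varepsilon/\sigma_\varepsilon)$ are determined by $x$, $\alpha_H$, $\nu$ and $\gamma$, with no residual dependence on $\sigma_\varepsilon^2$ or $\sigma_\eta^2$. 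Combining this with the analysis of the leading factor gives the theorem. I expect the main obstacle to be the careful verification that the estimated-variance denominator of $H(\widehat{\sigma}_\varepsilon,\widehat{\nu})$ factors as $\widehat{\sigma}_\varepsilon^2$ times a function of $x$ and $\widehat{\nu}$, so that every factor of the scale $\sigma_\varepsilon$ genuinely cancels; this is the one place where the estimation of the variances, rather than their assumed values, must be tracked, although it reuses the variance expressions already derived for Theorem \ref{thm_cov_depends}.
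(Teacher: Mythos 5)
Your proposal is correct and follows essentially the same route as the paper: start from the expression \eqref{scaled_expected_length} of Theorem~\ref{thm_sel}, note that $c^*$ is fixed by $1-\overline{\alpha}$ (together with $x$, $\alpha_H$, $1-\alpha$), and reduce $\widehat{\sigma}_{\varepsilon}/\sigma_{\varepsilon}$, $\widehat{\nu}$ (hence $\widehat{w}$) and the Hausman event to functions of standardized iid $N(0,1)$ variables, $x$, $\nu$ and $\gamma$, exactly as in the paper's appeal to the proof of Theorem~\ref{thm_cov_depends} and Lemma~\ref{lem_long_gI_gJ_h}. Your explicit verification that the estimated denominator of $H(\widehat{\sigma}_{\varepsilon},\widehat{\nu})$ factors as $\widehat{\sigma}_{\varepsilon}^2$ times a function of $x$ and $\widehat{\nu}$ is just the unpacked form of the paper's use of the standardized statistic $\widehat{h}$, so no substantive difference remains.
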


\begin{theorem}
\label{thm_sel_even}
Suppose that $x$, $\alpha_H$, $1 - \alpha$, $1 - \overline{\alpha}$ and $\nu$ are fixed.  When $\sigma_{\varepsilon}$ and $\sigma_{\mu}$ are replaced by the estimators described in Section \ref{EstimationVariances}, the conditional scaled expected length is an even function of $\gamma$.
\end{theorem}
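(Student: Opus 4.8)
The plan is to begin from the closed-form expression \eqref{scaled_expected_length} in Theorem~\ref{thm_sel} and to locate precisely the dependence on $\gamma$. By Theorem~\ref{thm_sel_depends}, once $x$, $\alpha_H$, $1-\alpha$, $1-\overline{\alpha}$ and $\nu$ are fixed, the scaled expected length is a function of $\gamma$ alone, so it suffices to show that \eqref{scaled_expected_length} is unchanged under $\gamma \mapsto -\gamma$; since $\gamma = \xi\,N^{1/2}/\sigma_\varepsilon$ with $N$ and $\sigma_\varepsilon$ fixed, this is the same as the replacement $\xi \mapsto -\xi$. First I would dispose of the factors that carry no $\gamma$-dependence. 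The prefactor $z_{1-\alpha/2}/\Phi^{-1}((c^*+1)/2)$ is $\gamma$-free because the coverage of the standard interval $J_c(\widehat{\sigma}_\varepsilon)$ does not depend on any unknown parameter and $c^*$ is pinned down through $\min_g$ and the endpoints $\nu_\ell,\nu_u$, none of which involve $\gamma$. Likewise the denominator $E(\widehat{\sigma}_\varepsilon/\sigma_\varepsilon)$ is $\gamma$-free because $\widehat{\sigma}_\varepsilon$ is built from the fixed-effects residuals, which difference out the between-variation and hence do not involve $\xi$. Everything then reduces to showing that the numerator
\[
E\!\left( (\widehat{\sigma}_\varepsilon/\sigma_\varepsilon)\bigl(\widehat{w}^{1/2}\,\mathcal{I}(\mathcal{H}) + \mathcal{I}(\mathcal{H}^c)\bigr)\, | \, x \right)
\]
is an even function of $\gamma$.

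The central step is to pin down the single channel through which $\gamma$ enters this expectation. I would verify from the explicit OLS/GLS forms that $\widehat{\sigma}_\varepsilon$ and $\widehat{\nu}$ are invariant under $\xi \mapsto -\xi$: the residuals $r_{it}$ depend only on $x$ and the $\varepsilon_{it}$'s, while the between residuals $\widetilde{r}_i$ are the projection of $\eta_i+\overline{\varepsilon}_i$ onto the orthogonal complement of the span of the regressors $1$ and $\overline{x}_i$, hence are free of $a$ and of $b_B=b+\xi$. Consequently $\widehat{w}$ and the Hausman denominator $V=\text{Var}(\widetilde{b}_W\,|\,x)+\text{Var}(\widetilde{b}_B\,|\,x)$ (with the unknowns replaced by $\widehat{\sigma}_\varepsilon,\widehat{\nu}$) are also $\gamma$-free, and the only carrier of $\gamma$ in the integrand is the event $\mathcal{H}=\{H(\widehat{\sigma}_\varepsilon,\widehat{\nu})\le z^2_{1-\alpha_H/2}\}=\{D^2\le z^2_{1-\alpha_H/2}\,V\}$, where $D=\widetilde{b}_W-\widetilde{b}_B$ has conditional mean $b_W-b_B=-\xi$, while its zero-mean part $Z=D+\xi$ has a conditional distribution that does not depend on $\xi$.

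I would finish with the sign-flip symmetry of the Gaussian errors. Under the measure-preserving map $(\{\eta_i\},\{\varepsilon_{it}\})\mapsto(\{-\eta_i\},\{-\varepsilon_{it}\})$, every quantity that is a quadratic form in the errors is unchanged, so $\widehat{\sigma}_\varepsilon$, $\widehat{\nu}$, $\widehat{w}$ and $V$ are all invariant, whereas $Z$, being linear in the errors, changes sign; thus the joint conditional law of $(Z,\widehat{\sigma}_\varepsilon,\widehat{\nu})$ is invariant under $Z\mapsto -Z$. Writing the integrand at parameter $\gamma$ through $(-\xi+Z)^2$ and applying this symmetry replaces it by the same integrand built from $(-\xi-Z)^2=(\xi+Z)^2$, which is exactly the integrand at $-\gamma$; hence the numerator, and with it the scaled expected length, is even in $\gamma$. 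I expect the main obstacle to be the careful bookkeeping that cleanly separates the odd, linear channel $D$ from the even, quadratic channels $\widehat{\sigma}_\varepsilon$ and $\widehat{\nu}$ — in particular confirming, from the forms of $\widehat{\sigma}_\varepsilon^2$ and $\widehat{\sigma}_\eta^2$, that the between residuals carry no dependence on $b_B$, so that only the mean of $D$ responds to $\xi$. This is the same symmetry that underlies the evenness asserted in Theorem~\ref{thm_cov_even}.
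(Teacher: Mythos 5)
Your proposal is correct and follows essentially the same route as the paper: isolate $\gamma$'s influence to the Hausman acceptance event and exploit the sign-flip symmetry $(\varepsilon,\eta)\mapsto(-\varepsilon,-\eta)$ of the Gaussian errors, under which the quadratic quantities $\widehat{\sigma}_{\varepsilon}$, $\widehat{\nu}$, $\widehat{w}$ are invariant while the linear part of $\widetilde{b}_W-\widetilde{b}_B$ changes sign. Your version is in fact slightly more careful than the paper's, which invokes only the distributional symmetry of $\widehat{h}$ from Theorem \ref{thm_cov_even} whereas the product inside the expectation really requires the \emph{joint} invariance of $(\widehat{\sigma}_{\varepsilon},\widehat{w},\widehat{h})$ that you state explicitly.
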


\subsection{Numerical illustration for the \textsl{airfare} data}

Let $SEL(\gamma, \nu)$ denote the conditional scaled expected length of
$K(\widehat{\sigma}_{\varepsilon}, \widehat{\nu})$.
For given $x$, $\alpha_H$, $1 - \alpha$ and $1 - \overline{\alpha}$, $SEL(g, \nu)$, minimized over $g$, is a function of $\nu$.
In this section we illustrate the application of $SEL(\gamma, \nu)$ using the \textsl{airfare} data.
We estimate the scaled expected length using the simulation method described in Section \ref{sec_est_sel} for $\alpha_H = 0.05$, $1 - \alpha = 0.95$, $1 - \overline{\alpha} = 0.98$ and $M = 50000$.

As stated in the introduction,
the 98\% equi-tailed confidence interval for $\nu$ is $[11.3976, 14.3829]$.
Using Theorem \ref{thm_sel}, this leads to the 98\% equi-tailed confidence interval for
$SEL(g, \nu)$, minimized over $g$, $[1.1012, 1.1244]$.
Of course, this confidence interval utilizes the information provided by the data about the unknown parameter $\nu$.
Using the usual definition of the scaled expected length that does not utilize this information, we find that $\inf_{n} \min_{g} SEL(g, n) = 2.5208$.
For both definitions of the scaled expected length, we have nothing to gain by using the confidence interval $K(\widehat{\sigma}_{\varepsilon}, \widehat{\nu})$.

%
%

\section{Simulation methods}

In Section \ref{sec_cp_est's_known} we give a new theorem that allows us to find a control variate for the estimation by simulation of
$CP(\gamma, \nu)$ and $SEL(\gamma, \nu)$
for any given $\gamma$ and $\nu$.  In Sections \ref{sec_est_cp} and \ref{sec_est_sel} we describe how to estimate $CP(\gamma, \nu)$ and $SEL(\gamma, \nu)$, respectively, by simulation.  We also describe how to make use of a control variate for variance reduction.
Section \ref{sec_est_pdfs_CP} describes the estimation of the density functions of $CP(\gamma, \widehat{\nu})$ and $CP(\widehat{\gamma}, \nu)$ shown in Figures \ref{CP_pdf_gamtrue_nuhat} and \ref{CP_pdf_gamhat_nutrue}, respectively, and Section \ref{sec_est_ci_nu} describes
the simulation method used to find the quantiles $F_{\overline{\alpha}/2}$ and $F_{1 - \overline{\alpha}/2}$ needed for the
 construction of the equi-tailed confidence interval for $\nu$ described in Section \ref{sec_conf_int_nu}.

\subsection{The two-stage procedure when $\boldsymbol{\sigma_{\varepsilon}}$ and $\boldsymbol{\nu}$ are known}
\label{sec_cp_est's_known}

In this section we give a new theorem that is used to find a control variate for the estimation by simulation of the coverage probability and scaled expected length of $K(\widehat{\sigma}_{\varepsilon}, \widehat{\nu})$.
For the rest of this section suppose that $\sigma_{\varepsilon}$ and $\nu$ are known.
Define the random variables
\begin{align*}
g_I = \frac{\widehat{b} - b}{\left(\text{Var}(\widehat{b} \, | \, x )\right)^{1/2}}, \, \, g_J = \frac{\widetilde{b}_W - b}{\left( \text{Var}(\widetilde{b}_W \, | \, x) \right)^{1/2}}, \, \,  h = \frac{ \widetilde{b}_W - \widetilde{b}_B}{\left( \text{Var}(\widetilde{b}_W \, | \, x) + \text{Var}(\widetilde{b}_B \, | \, x) \right)^{1/2}}.
\end{align*}
By the law of total probability $P\left(b \in K(\sigma_{\varepsilon}, \nu) \, | \, x \right)$, is equal to
\begin{align}
 & P\left( b \in I(\sigma_{\varepsilon}, \nu), \, H(\sigma_{\varepsilon}, \nu) \leq z^2_{1-\alpha_H/2} \, | \, x \right) + P \left( b \in J(\sigma_{\varepsilon}), \, H(\sigma_{\varepsilon}, \nu) > z^2_{1-\alpha_H/2} \, | \, x \right) \nonumber \\[1.2ex]
&= (1 - \alpha) + P(|g_I| \leq z_{1-\alpha/2}, \, |h| \leq z_{1-\alpha_H/2} \, | \, x ) - P(|g_J| \leq z_{1-\alpha/2}, \, |h| \leq z_{1-\alpha_H/2} \, | \, x). \label{cov_prob_known}
\end{align}
The coverage probability of $K(\sigma_{\varepsilon}, \nu)$ is determined by the distributions of the random vectors $(g_I, h)$ and $(g_J, h)$.
Define $q(\nu, T) = \nu + T^{-1}$ and $w = q(\nu, T) / (q(\nu, T) + r(x))$.
Theorem \ref{thm: dist_gih_gjh} gives the distributions of the random vectors $(g_I, h)$ and $(g_J, h)$.  This theorem is proved in the appendix.
\begin{theorem}
\label{thm: dist_gih_gjh}
Conditional on $x$, $(g_I, h)$ and $(g_J, h)$ have bivariate normal distributions where
\begin{align*}
&E(g_J \, | \, x) = 0, \, \, \, \, \, \, \, \text{Var}(g_J \, | \, x) = 1, \, \, \, \, \, \, \,
E(g_I \, | \, x) =\gamma \, \left( \frac{\SSB}{N} \right)^{1/2} \left( \frac{r(x)}{q(\nu, T) \left( q(\nu, T) + r(x) \right)}\right)^{1/2} , \\[1.2ex]
&\text{Var}(g_I \, | \, x) = 1, \, \, \, \, \, \, \, E(h \, | \, x) = -\gamma \, \left(\frac{\SSB}{N}\right)^{1/2} \left(\frac{ 1 }{ r(x) + q(\nu, T) } \right)^{1/2} , \, \, \, \, \, \, \, \text{Var}(h \, | \, x) = 1, \\[1.2ex]
&\text{Cov}(g_J, h \, | \, x ) = \left( \frac{r(x)}{ r(x) + q(\nu, T)} \right)^{1/2}, \, \, \, \, \, \, \, \text{Cov}(g_I, h \, | \, x) = 0.
\end{align*}
\end{theorem}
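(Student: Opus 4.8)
The plan is to exploit that, conditional on $x$, every quantity in sight is a linear function of the jointly Gaussian error vector $(\eta_1, \dots, \eta_N, \varepsilon_{11}, \dots, \varepsilon_{NT})$. First I would settle joint normality. For fixed $x$, each $y_{it}$ is an affine function of these errors, and each of $\widehat{b}$, $\widetilde{b}_W$, $\widetilde{b}_B$ is a fixed linear combination of the $y_{it}$, so the triple $(\widehat{b}, \widetilde{b}_W, \widetilde{b}_B)$ is jointly normal conditional on $x$. Since $g_I$, $g_J$ and $h$ are obtained from this triple by subtracting constants and dividing by the standard deviations, which are non-random given $x$ (here $\sigma_\varepsilon$ and $\nu$ are known), the pairs $(g_I, h)$ and $(g_J, h)$ are bivariate normal. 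It then remains only to compute the stated moments.

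Second, I would record the explicit linear representations coming from the fixed-effects model \eqref{FE_model} and the between model \eqref{BE_model}. For the within estimator, $\widetilde{b}_W - b = \SSW^{-1}\sum_{i,t}(x_{it}-\overline{x}_i)\varepsilon_{it}$, using $\sum_t(x_{it}-\overline{x}_i)\overline{\varepsilon}_i = 0$, which gives $E(\widetilde{b}_W - b \mid x) = 0$ and $\text{Var}(\widetilde{b}_W \mid x) = \sigma_\varepsilon^2/\SSW$, hence $E(g_J \mid x)=0$ and $\text{Var}(g_J \mid x)=1$. For the between estimator, $\widetilde{b}_B - b_B = \SSB^{-1}\sum_i(\overline{x}_i - \overline{x})(\eta_i + \overline{\varepsilon}_i)$ with $b_B = b+\xi$, so $E(\widetilde{b}_B - b \mid x) = \xi$ and $\text{Var}(\widetilde{b}_B \mid x) = \sigma_\varepsilon^2\,q(\nu,T)/\SSB$, where $q(\nu,T)=\nu+T^{-1}$ arises from $\text{Var}(\eta_i+\overline{\varepsilon}_i)=\sigma_\varepsilon^2(\nu+T^{-1})$. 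A short but essential computation shows $\text{Cov}(\widetilde{b}_W,\widetilde{b}_B \mid x)=0$: the only contributing term is the covariance between the within errors and $\overline{\varepsilon}_i$, and it carries a factor $\sum_t(x_{it}-\overline{x}_i)=0$. The moments of $h$ and the covariance $\text{Cov}(g_J, h \mid x)$ then follow by expanding $\widetilde{b}_W-\widetilde{b}_B$ and substituting $\gamma=\xi N^{1/2}/\sigma_\varepsilon$ and $r(x)=\SSB/\SSW$.

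The crux, and the step I expect to be the main obstacle, is obtaining a workable description of the random-effects GLS estimator $\widehat{b}$. Here I would use the eigendecomposition of the within-cluster covariance matrix $\Sigma = \sigma_\varepsilon^2 I_T + \sigma_\eta^2 J_T$, whose inverse is $\sigma_\varepsilon^{-2}\big(I_T - \tfrac{\sigma_\eta^2}{\sigma_\varepsilon^2+T\sigma_\eta^2}J_T\big)$, to show that the GLS objective $\sum_i e_i^\top \Sigma^{-1} e_i$ splits orthogonally into a within part weighted by $1/\sigma_\varepsilon^2$ and a between part weighted by $1/(\sigma_\varepsilon^2+T\sigma_\eta^2)$. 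Concentrating out the intercept, and using $T\sigma_\varepsilon^2/(\sigma_\varepsilon^2+T\sigma_\eta^2)=1/q(\nu,T)$, the objective becomes $\tfrac{\SSW}{\sigma_\varepsilon^2}(b-\widetilde{b}_W)^2 + \tfrac{\SSB}{\sigma_\varepsilon^2 q(\nu,T)}(b-\widetilde{b}_B)^2$, whose minimiser is the weighted average
$$\widehat{b} = w\,\widetilde{b}_W + (1-w)\,\widetilde{b}_B, \qquad w = \frac{q(\nu,T)}{q(\nu,T)+r(x)}.$$
This representation does the heavy lifting, and once it is in hand the remaining computations are routine.

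Finally I would assemble the moments of $(g_I, h)$. Combining the weighted-average form with $E(\widetilde{b}_W-b \mid x)=0$, $E(\widetilde{b}_B-b \mid x)=\xi$ and $\text{Cov}(\widetilde{b}_W,\widetilde{b}_B \mid x)=0$ yields $E(\widehat{b} - b \mid x) = (1-w)\xi$ and $\text{Var}(\widehat{b} \mid x)=\sigma_\varepsilon^2\, q(\nu,T)/\big(\SSW(q(\nu,T)+r(x))\big)$, and, most importantly, $\text{Cov}(\widehat{b},\widetilde{b}_W-\widetilde{b}_B \mid x) = w\,\text{Var}(\widetilde{b}_W \mid x) - (1-w)\,\text{Var}(\widetilde{b}_B \mid x) = 0$, the cancellation being precisely what the weight $w$ is constructed to produce. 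Dividing by the appropriate standard deviations and substituting $\gamma$, $r(x)$ and $q(\nu,T)$ then reproduces $E(g_I \mid x)$, $\text{Var}(g_I \mid x)=1$ and $\text{Cov}(g_I, h \mid x)=0$, completing the verification of all the stated moments.
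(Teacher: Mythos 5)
Your proposal is correct and follows essentially the same route as the paper: joint normality from linearity in the Gaussian errors, the explicit within/between representations giving $\text{Var}(\widetilde{b}_W\,|\,x)=\sigma^2_{\varepsilon}/\SSW$, $\text{Var}(\widetilde{b}_B\,|\,x)=\sigma^2_{\varepsilon}q(\nu,T)/\SSB$ and $\text{Cov}(\widetilde{b}_W,\widetilde{b}_B\,|\,x)=0$, and the weighted-average form $\widehat{b}=w\,\widetilde{b}_W+(1-w)\,\widetilde{b}_B$ with $w=q(\nu,T)/(q(\nu,T)+r(x))$, from which all the stated moments follow. The only difference is that the paper simply cites Maddala's (1971) equality for this decomposition, whereas you derive it from the block structure of $L(\nu)^{-1}$; that makes your argument self-contained but otherwise identical.
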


\subsection{Estimation of $\boldsymbol{CP(\gamma, \nu)}$ for given $\boldsymbol{\gamma}$ and $\boldsymbol{\nu}$ by simulation}
\label{sec_est_cp}

Consider a grid of $\gamma$ values.  Note that, by the proof of Theorem \ref{thm_cov_depends}, $\widehat{g}_I$, $\widehat{g}_J$ and $\widehat{h}$ can be expressed in terms of the $\varepsilon_{it}^{\dag}$'s and the $\eta_i^{\dag}$'s, where the  $\varepsilon_{it}^{\dag}$'s and $\eta_i^{\dag}$'s are iid $N(0, 1)$.  The simulation method consists of $M$ independent simulation runs.  On the $k$'th simulation run ($k=1, \dots, M$) we generate observations of the $\varepsilon_{it}^{\dag}$'s and the $\eta_i^{\dag}$'s and compute $\widehat{g}_I$, $\widehat{g}_J$ and $\widehat{h}$ for each $\gamma$ in the grid of values of $\gamma$.  Let the values of $\widehat{g}_I$, $\widehat{g}_J$ and $\widehat{h}$ on the $k$'th simulation run be denoted by $\widehat{g}_{I, k}$, $\widehat{g}_{J, k}$ and $\widehat{h}_k$, respectively.  Also let $\textsc{CP} = P(b \in K(\widehat{\sigma}_{\varepsilon}, \widehat{\nu}) | x)$, the coverage probability of $K(\widehat{\sigma}_{\varepsilon}, \widehat{\nu})$, conditional on $x$, when $\sigma_{\varepsilon}$ and $\nu$ are unknown.
Thus we can define the brute force simulation estimator of \textsc{CP} as
\begin{align*}
\widehat{\textsc{CP}} = \frac{1}{M} \sum_{k=1}^M \bigg( &{\cal I}\left( |\widehat{g}_{I, k}| \leq z_{1-\alpha/2}, \, |\widehat{h}_k| \leq z_{1-\alpha_H/2} \right) + {\cal I}\left(|\widehat{g}_{J, k}| \leq z_{1-\alpha/2}, \, |\widehat{h}_k| > z_{1-\alpha_H/2} \right) \bigg).
\end{align*}
We can also find an estimator of \textsc{CP} that makes use of a control variate for variance reduction.  This is done as follows.  Let $\textsc{CPK} = P(b \in K(\sigma_{\varepsilon}, \nu) | x)$, the coverage probability of $K(\sigma_{\varepsilon}, \nu)$, conditional on $x$, when $\sigma_{\varepsilon}$ and $\nu$ are known.  Let $g_{I, k}$, $g_{J, k}$ and $h_k$ be the values of $g_I$, $g_J$ and $h$, respectively, on the $k$'th simulation run.  An unbiased estimator of $\textsc{CPK}$ is
\begin{align*}
\widehat{\textsc{CPK}} = \frac{1}{M} \sum_{k=1}^M \bigg( &{\cal I}\left( |g_{I, k}| \leq z_{1-\alpha/2}, \, |h_k| \leq z_{1-\alpha_H/2} \right) + {\cal I}\left(|g_{J, k}| \leq z_{1-\alpha/2}, \, |h_k| > z_{1-\alpha_H/2} \right) \bigg).
\end{align*}
Since we use the same $\varepsilon_{it}^{\dag}$'s and $\eta_i^{\dag}$'s on each simulation run to compute $\widehat{g}_{I, k}$, $\widehat{g}_{J, k}$ and $\widehat{h}_k$, as we do to compute $g_{I, k}$, $g_{J, k}$ and $h_k$, we expect that the correlation between $\widehat{\textsc{CP}}$ and $\widehat{\textsc{CPK}}$ will be close to 1.
We can find $\textsc{CPK}$ exactly using \eqref{cov_prob_known} and Theorem \ref{thm: dist_gih_gjh}.  Therefore an estimator of $\textsc{CP}$ which makes use of a control variate for variance reduction is
\begin{equation*}
\widetilde{\textsc{CP}} = \widehat{\textsc{CP}} - (\widehat{\textsc{CPK}} - \textsc{CPK}),
\end{equation*}
where $(\widehat{\textsc{CPK}} - \textsc{CPK})$ is the control variate which has expected value zero.
We estimate the variance of this estimator by noting that it is an average of iid random variables.


An efficiency analysis performed using the airfare data described in the introduction reveals that, if we find the relative efficiency of the control variate estimator to the brute force estimator for, for example, $\nu \in \{11.3976, 14.3829\}$ (the endpoints of the 98\% confidence interval for $\nu$) and for every $\gamma$ in a grid of values, the minimum gain in efficiency is approximately 10.51 and the maximum gain in efficiency is approximately 18.38.  In other words, we gain efficiency by using the control variate estimator instead of the brute force estimator.

\subsection{Estimation of $\boldsymbol{SEL(\gamma, \nu)}$ for given $\boldsymbol{\gamma}$ and $\boldsymbol{\nu}$ by simulation}
\label{sec_est_sel}

Consider a grid of $\gamma$ values.
Let $\textsc{NUM} =  E \left( (\widehat{\sigma}_{\varepsilon} / \sigma_{\varepsilon}) \left( \widehat{w}^{1/2} {\cal I}({\cal H}) + {\cal I}({\cal H}^c) \right) \, | \, x \right)$ and $\textsc{DENOM} = E(\widehat{\sigma}_{\varepsilon} / \sigma_{\varepsilon})$.
Then the conditional scaled expected length \eqref{scaled_expected_length} is equal to
\begin{equation*}
\frac{ z_{1-\alpha/2}}{\Phi^{-1}\left( (c^* + 1) / 2 \right)} \, \frac{\textsc{NUM}}{\textsc{DENOM}}.
\end{equation*}
By the proof of Theorem \ref{thm_sel_depends}, $\textsc{NUM}$ and $\textsc{DENOM}$ can be expressed in terms of the $\varepsilon_{it}^{\dag}$'s and the $\eta_i^{\dag}$'s, where the $\varepsilon_{it}^{\dag}$'s and $\eta_i^{\dag}$'s are iid $N(0, 1)$.
The simulation method consists of $M$ independent simulation runs.
Define $\textsc{NUM}_k$ to be the value of $(\widehat{\sigma}_{\varepsilon} / \sigma_{\varepsilon}) \left( \widehat{w}^{1/2} \, {\cal I}({\cal H}) + {\cal I}({\cal H}^c) \right)$ and $\textsc{DENOM}_k$ to be the value of $ \widehat{\sigma}_{\varepsilon} / \sigma_{\varepsilon}$ on the $k$'th simulation run  ($k=1, \dots, M$).
For one set of $M$ independent simulation runs we do the following.
On the $k$'th simulation run we generate observations of the $\varepsilon_{it}^{\dag}$'s and the $\eta_i^{\dag}$'s and compute $\textsc{NUM}_k$.
For a second set of $M$ independent simulation runs we do the following.
On the $k$'th simulation run we generate observations of the $\varepsilon_{it}^{\dag}$'s and compute $\textsc{DENOM}_k$.
Now we can define the brute force simulation estimators
\begin{equation*}
\widehat{\textsc{NUM}} = \frac{1}{M} \sum_{k=1}^M \textsc{NUM}_k  \, \, \text{ and } \, \, \widehat{\textsc{DENOM}} = \frac{1}{M} \sum_{k=1}^M \textsc{DENOM}_k.
\end{equation*}

We can also find a control variate estimator of $\textsc{NUM}$.  Define
${\cal B} = \big \{-z_{1-\alpha_H/2} \leq h \leq z_{1-\alpha_H/2} \big \}$ and $\textsc{NUMK} = E\left( w^{1/2} {\cal I}({\cal B}) + {\cal I}({\cal B}^c) \, | \, x\right)$.  Let $\textsc{NUMK}_k$ be the value of $w^{1/2} \, {\cal I}({\cal B}) + {\cal I}({\cal B}^c)$ for the $k$'th simulation run.  Define
\begin{equation*}
\widehat{\textsc{NUMK}} = \frac{1}{M} \sum_{k=1}^M \textsc{NUMK}_k.
\end{equation*}
Now $\textsc{NUMK} = w^{1/2} P({\cal B} \, | \, x) + 1 - P({\cal B} \, | \, x) = 1 + (w^{1/2} - 1) P({\cal B} \, | \, x)$, which can be found exactly using Theorem \ref{thm: dist_gih_gjh}.  Thus an estimator of $\textsc{NUM}$ which makes use of a control variate for variance reduction is
\begin{align*}
\widetilde{\textsc{NUM}} &= \widehat{\textsc{NUM}} - \left( \widehat{\textsc{NUMK}}  - \textsc{NUMK} \right),
\end{align*}
where $\left( \widehat{\textsc{NUMK}}  - \textsc{NUMK} \right)$ is the control variate which has expected value zero.
Therefore an estimator of the scaled expected length that uses a control variate for variance reduction is
\begin{equation*}
\frac{z_{1-\alpha/2}}{\Phi^{-1}((c^*+1)/2)} \frac{\widetilde{\textsc{NUM}}}{\widehat{\textsc{DENOM}}}.
\end{equation*}
%
We estimate the variance of $\widetilde{\textsc{NUM}}$ by noting that it is an average of iid random variables.

Similarly to Section \ref{sec_est_cp}, an efficiency analysis was performed using the airfare data described in the Introduction.  This analysis reveals that, if we find the relative efficiency of the control variate estimator to the brute force estimator
of $\textsc{NUM}$
 for, for example, $\nu \in \{11.3976, 14.3829\}$ (the endpoints of the 98\% confidence interval for $\nu$) and for every $\gamma$ in a grid of values, the minimum gain in efficiency is approximately 0.86 and the maximum gain in efficiency is approximately 10.48.  In other words, we gain efficiency by using the control variate estimator instead of the brute force estimator.


\subsection{Estimation of the density functions of $\boldsymbol{CP(\gamma, \widehat{\nu})}$ and \newline $\boldsymbol{CP(\widehat{\gamma}, \nu)}$ for given $\boldsymbol{\gamma}$ and $\boldsymbol{\nu}$ by simulation}
\label{sec_est_pdfs_CP}

Suppose that $\gamma$ and $\nu$ are specified.
We begin by describing how to estimate the density function of $CP(\gamma, \widehat{\nu})$ by simulation.
Note that
\begin{align}
\label{eqn_pivot}
\frac{\widehat{\nu} + T^{-1}}{\nu + T^{-1}} = \left( N^{-1} \sum_{i=1}^N \left( \frac{ \widetilde{r}_i}{\sigma_{\varepsilon}} \right)^2  \right) \left( \left(N(T-1)\right)^{-1} \sum_{i=1}^N \sum_{t=1}^T \left( \frac{r_{it}}{\sigma_{\varepsilon}} \right)^2 \right) \left( \frac{1}{\nu + T^{-1}} \right).
\end{align}
In the proof of Theorem \ref{thm_cov_depends} we show that $(\widetilde{r}_i / \sigma_{\varepsilon})$ and $(r_{it} / \sigma_{\varepsilon})$ are functions of the $\varepsilon_{it}^{\dag}$'s, $\eta_i^{\dag}$'s, $\nu$ and $x$, where the $\eta_i^{\dag}$'s and $\varepsilon_{it}^{\dag}$'s
are iid $N(0,1)$.
Our simulation consist of $M$ independent simulation runs.
On the $k$'th simulation run ($k=1, \dots, M$) we generate observations of the $\eta_i^{\dag}$'s and the $\varepsilon_{it}^{\dag}$'s and use these to compute an observation of $\left( \widehat{\nu} + T^{-1} \right) / \left( \nu + T^{-1} \right)$
via \eqref{eqn_pivot}.
Thus the $M$ independent simulation runs result in $M$ independent observations of $\left( \widehat{\nu} + T^{-1} \right) / \left( \nu + T^{-1} \right)$.
We transform these observations using the specified value of $\nu$ to obtain $M$ observations of $\widehat{\nu}$.
Now estimate $CP(\gamma, \nu)$, using the simulation method described in Section \ref{sec_est_cp}, for an appropriate grid of equally-spaced values of $\nu$.  Estimates of $CP(\gamma, \nu)$ for values of $\nu$ not on this grid are obtained by linear interpolation.
Use this estimate of $CP(\gamma, \nu)$, considered as a function of $\nu$,
to transform the $M$ observations of $\widehat{\nu}$ into $M$ observations of $CP(\gamma, \widehat{\nu})$. We use the \texttt{density} function in \texttt{R} to approximate the density function of $CP(\gamma, \widehat{\nu})$.

Now we describe how to estimate the density function of $CP(\widehat{\gamma}, \nu)$ by simulation.  It can be shown that $\widehat{\gamma}$, given by \eqref{eqn_gam_hat},
 is a function of the $\varepsilon_{it}^{\dag}$'s, $\eta_i^{\dag}$'s, $\gamma$, $\nu$ and $x$, where the $\eta_i^{\dag}$'s and $\varepsilon_{it}^{\dag}$'s
are iid $N(0,1)$.
Our simulation consist of $M$ independent simulation runs.
On the $k$'th simulation run ($k=1, \dots, M$) we generate observations of the $\eta_i^{\dag}$'s and the $\varepsilon_{it}^{\dag}$'s and use these to compute an observation of $\widehat{\gamma}$.
Now estimate $CP(\gamma, \nu)$, using the simulation method described in Section \ref{sec_est_cp}, for an appropriate grid of equally-spaced values of $\gamma$.  Estimates of $CP(\gamma, \nu)$ for values of $\gamma$ not on this grid are obtained by linear interpolation.
Use this estimate of $CP(\gamma, \nu)$, considered as a function of $\gamma$,
to transform the $M$ observations of $\widehat{\gamma}$ into $M$ observations of $CP(\widehat{\gamma}, \nu)$. We use the \texttt{density} function in R to approximate the density function of $CP(\widehat{\gamma}, \nu)$.

\subsection{Estimation of the quantiles $\boldsymbol{F_{\overline{\alpha}/2}}$ and $\boldsymbol{F_{1 - \overline{\alpha}/2}}$ by simulation}
\label{sec_est_ci_nu}

To construct the equi-tailed $1 - \overline{\alpha}$ confidence interval \eqref{conf_int_nu}
for $\nu$, we need to compute $\widehat{\nu}$ for our data and find
$F_{\overline{\alpha}/2}$ and $F_{1 - \overline{\alpha}/2}$, the $\overline{\alpha}/2$ and $1 - \overline{\alpha}/2$ quantiles, respectively, of the distribution of the pivotal quantity $(\widehat{\nu} + T^{-1}) / (\nu + T^{-1})$.
We estimate these quantiles by simulation as follows.
Our simulation consist of $M$ independent simulation runs.
Since $(\widehat{\nu} + T^{-1}) / (\nu + T^{-1})$ is a pivotal quantity we specify an arbitrary value of $\nu$, say $\nu = 1$.
Similarly to the previous section, on the $k$'th simulation run (for $k=1, \dots, M$) we generate observations of the $\eta_i^{\dag}$'s and the $\varepsilon_{it}^{\dag}$'s and use these to compute an observation of $\left( \widehat{\nu} + T^{-1} \right) / \left( \nu + T^{-1} \right)$.
Thus $M$ independent simulation runs result in $M$ observations of $\left( \widehat{\nu} + T^{-1} \right) / \left( \nu + T^{-1} \right)$.
Arrange these observations in increasing order.
Estimate the $p$'th quantile of the distribution of $\left( \widehat{\nu} + T^{-1} \right)/\left( \nu + T^{-1} \right)$ by the $r$'th ordered estimate, where $r$ and $M$ are chosen such that $p = r / (M + 1)$.
For example, to estimate the 0.01'th quantile of the distribution of $\left(\widehat{\nu} + T^{-1} \right)/\left(\nu + T^{-1} \right)$  we may choose $r = 100$ and $M = 9999$.
A confidence interval for $\nu$ is now found using \eqref{conf_int_nu}, where $F_{1-\overline{\alpha}/2}$ and $F_{\overline{\alpha}/2}$ are replaced by the relevant quantile estimates obtained by simulation.

\begin{appendix}

\section*{Appendix}

\subsection*{The estimators of $\boldsymbol{b_B}$ and $\boldsymbol{b_W}$}

We can write model \eqref{CRE_model} in matrix form as $Y = X \beta + u$, where $Y$, $\beta$ and $u$ are the column vectors $Y = (y_{11}, \dots, y_{1T}, \dots, y_{N1}, \dots, y_{NT})$, $\beta = (a, b_W, b_B)$ and $u = (\eta_1 + \varepsilon_{11}, \dots, \eta_1 + \varepsilon_{1T}, \dots, \eta_N + \varepsilon_{N1}, \dots, \eta_N + \varepsilon_{NT})$,
and $X$ is the matrix
%
\begin{equation*}
X = \begin{bmatrix} 1 & \dots & 1 & \dots & 1 & \dots & 1 \\ x_{11} - \overline{x}_1 & \dots & x_{1T} - \overline{x}_1 & \dots & x_{N1} - \overline{x}_N & \dots & x_{NT} - \overline{x}_N \\ \overline{x}_1 & \dots & \overline{x}_1 & \dots & \overline{x}_N & \dots & \overline{x}_N \end{bmatrix}'.
\end{equation*}
Let $I_T$ denote the $T \times T$ identity matrix and $e_T$ be a $T$ column vector of 1's.  The GLS estimator of $\beta$ is given by
$\big(X' L(\nu)^{-1} X \big)^{-1} X' L(\nu)^{-1} Y$, where $L(\nu)^{-1}$ is an $NT \times NT$ block diagonal matrix with
$N$ identical block diagonal elements each of which is the $T \times T$ matrix $I_T - (\nu / (1 + \nu T)) \, e_T \, e_T'$.  Using this, it can be shown that the GLS estimators of $b_W$ and $b_B$ are
\begin{align}
\widetilde{b}_W &= \frac{ \sum_{i=1}^N \sum_{t=1}^T (x_{it} - \overline{x}_i)(y_{it} - \overline{y}_i)}{\SSW}  = b_W + \frac{ \sum_{i=1}^N \sum_{t=1}^T (x_{it} - \overline{x}_i)(\varepsilon_{it} - \overline{\varepsilon}_i)}{\SSW}  \label{est_b_W}  \\[1.2ex]
\widetilde{b}_B &= \frac{\sum_{i=1}^N (\overline{x}_i - \overline{x})(\overline{y}_i - \overline{y})}{\SSB} = b_B + \frac{ \sum_{i=1}^N (\overline{x}_i - \overline{x})((\eta_i - \overline{\eta}) + (\overline{\varepsilon}_i - \overline{\varepsilon}))}{\SSB}. \label{est_b_B}
\end{align}
Remember, as defined in Section 4, $\SSB = \sum_{i=1}^N (\overline{x}_i - \overline{x})^2$ and
$\SSW = \sum_{i=1}^N \sum_{t=1}^T (x_{it} - \overline{x}_i)^2$.
The OLS estimator of $b_W$ based on model \eqref{FE_model} is \eqref{est_b_W} and the OLS estimator of $b_B$ based on model \eqref{BE_model} is \eqref{est_b_B}.
Note that the covariance matrix of the GLS estimator of $\beta$ is given by $\sigma_{\varepsilon}^2 \big( X' L(\nu)^{-1} X \big)^{-1}$.

\subsection*{The random variables that determine the conditional \newline coverage probability}

Let $\widehat{g}_I$, $\widehat{g}_J$ and $\widehat{h}$ denote the random variables $g_I$, $g_J$ and $h$ introduced in Section \ref{sec_cp_est's_known}, when $\sigma_{\varepsilon}$ and $\nu$ have been replaced by their estimators.  By Section \ref{cp_pretest_interval}, $P\left(b \in K(\widehat{\sigma}_{\varepsilon}, \widehat{\nu}) \,  | \, x \right)$ is equal to
\begin{align*}
& P\left( b \in I(\widehat{\sigma}_{\varepsilon}, \widehat{\nu}), \, H(\widehat{\sigma}_{\varepsilon}, \widehat{\nu}) \leq z_{1-\alpha_H/2}^2 \, | \, x \right) + P\left( b \in J(\widehat{\sigma}_{\varepsilon}), \,  H(\widehat{\sigma}_{\varepsilon}, \widehat{\nu}) > z_{1-\alpha_H/2}^2 \, | \, x \right) \\
& = P\left( |\widehat{g}_I| \leq z_{1-\alpha/2}, \, |\widehat{h}| \leq z_{1-\alpha_H/2}^2 \, | \, x \right) + P\left( |\widehat{g}_J| \leq z_{1-\alpha/2}, \, |\widehat{h}| > z_{1-\alpha_H/2} \, | \, x \right).
\end{align*}

The following lemma gives expressions for $\widehat{g}_I$, $\widehat{g}_J$ and $\widehat{h}$ that are used
in the proofs of the theorems.

\begin{lemma}
\label{lem_long_gI_gJ_h}
\begin{align*}
\widehat{g_I} &= \frac{ q(\widehat{\nu}, T) \sum_{i=1}^N \sum_{t=1}^T (x_{it} - \overline{x}_i)(\varepsilon_{it} - \overline{\varepsilon}_i) + \sum_{i=1}^N (\overline{x}_i - \overline{x})\left((\eta_i - \overline{\eta}) + (\overline{\varepsilon}_i - \overline{\varepsilon})\right)}{ \widehat{\sigma}_{\varepsilon} \left( q(\widehat{\nu}, T) \, \SSW \,  (q(\widehat{\nu}, T) + r(x)) \right)^{1/2}}      \\[1.2ex]
& \indent + \gamma  \left( \frac{r(x)}{ q(\widehat{\nu}, T) \, (q(\widehat{\nu}, T) + r(x)) } \right)^{1/2} \left( \frac{\SSB}{N} \right)^{1/2} \left( \frac{\sigma_{\varepsilon}}{\widehat{\sigma}_{\varepsilon}} \right)\\
\\
\widehat{g}_J &= \frac{ \sum_{i=1}^N \sum_{t=1}^T (x_{it} - \overline{x}_i)(\varepsilon_{it} - \overline{\varepsilon}_i)}{\widehat{\sigma}_{\varepsilon} \left( \SSW \right)^{1/2} }\\
\\
\widehat{h} &= \frac{r(x) \sum_{i=1}^N \sum_{t=1}^T (x_{it} - \overline{x}_i)(\varepsilon_{it} - \overline{\varepsilon}_i) - \sum_{i=1}^N (\overline{x}_i - \overline{x}) \left( (\eta_i - \overline{\eta}) + (\overline{\varepsilon}_i - \overline{\varepsilon}) \right)}{\widehat{\sigma}_{\varepsilon} \left(\SSB \,  (q(\widehat{\nu}, T) + r(x)) \right)^{1/2}} \\[1.2ex]
&\indent - \gamma \left( \frac{1}{r(x) + q(\widehat{\nu}, T)} \right)^{1/2} \left( \frac{\SSB}{N} \right)^{1/2} \left( \frac{\sigma_{\varepsilon}}{\widehat{\sigma}_{\varepsilon}} \right).
\end{align*}
\end{lemma}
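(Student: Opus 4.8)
The plan is to start from the explicit GLS/OLS representations already recorded in the appendix and then carry out the substitutions and algebraic simplifications that produce the three displayed formulas. First I would assemble the ingredients: the within and between estimators $\widetilde b_W$ and $\widetilde b_B$ are given by \eqref{est_b_W} and \eqref{est_b_B}, and the GLS estimator $\widehat b$ of the common slope under $\xi=0$ is the precision-weighted combination of $\widetilde b_W$ and $\widetilde b_B$ (these are built from orthogonal, hence conditionally on $x$ independent, components of the data). Writing the weights in terms of the conditional variances, I would record $\text{Var}(\widetilde b_W \mid x) = \sigma_\varepsilon^2/\SSW$, $\text{Var}(\widetilde b_B \mid x) = \sigma_\varepsilon^2\, q(\nu,T)/\SSB$, and hence $\widehat b = \big(q(\nu,T)\,\widetilde b_W + r(x)\,\widetilde b_B\big)/\big(q(\nu,T)+r(x)\big)$ with $\text{Var}(\widehat b \mid x) = \sigma_\varepsilon^2\, q(\nu,T)\big/\big(\SSW(q(\nu,T)+r(x))\big)$. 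When $\sigma_\varepsilon$ and $\nu$ are replaced by $\widehat\sigma_\varepsilon$ and $\widehat\nu$, these formulas hold verbatim with $q(\nu,T)$ replaced by $q(\widehat\nu,T)$ and $\sigma_\varepsilon$ by $\widehat\sigma_\varepsilon$ in the standard errors, which is exactly what enters $\widehat g_I$, $\widehat g_J$ and $\widehat h$.

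Next I would substitute the model identities $b_W = b$ and $b_B - b = \xi$ into the centred estimators, so that each of $\widehat b - b$, $\widetilde b_W - b$ and $\widetilde b_W - \widetilde b_B$ decomposes into a mean-zero stochastic part built from the $\varepsilon_{it}-\overline\varepsilon_i$ and the $(\eta_i-\overline\eta)+(\overline\varepsilon_i-\overline\varepsilon)$ terms, plus a deterministic multiple of $\xi$. Dividing by the corresponding estimated standard error and then replacing $\xi$ by $\gamma\,\sigma_\varepsilon/N^{1/2}$ (the definition of $\gamma$) turns the deterministic part into the $\gamma$-terms displayed in the lemma, carrying a surviving factor $\sigma_\varepsilon/\widehat\sigma_\varepsilon$, while the stochastic part produces the leading fractions. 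For $\widehat g_J$ there is no $\xi$-term, since $\widetilde b_W$ is unbiased for $b$ irrespective of $\xi$, which is why its formula contains no $\gamma$.

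The routine but essential work is the algebraic collapse of the fractions into the compact forms shown, and this rests on the identity $r(x)=\SSB/\SSW$, which I would use repeatedly. For instance, $\SSB^{1/2}/\SSW = r(x)/\SSB^{1/2}$ converts the between-estimator contribution to $\widehat h$ into the displayed $\SSB$-denominator, and $r(x)^2\,\SSW = r(x)\,\SSB$ is what reconciles the $\gamma$-coefficient of $\widehat g_I$ with the factor $(\SSB/N)^{1/2}$. I expect this bookkeeping — keeping the powers of $\SSW$, $\SSB$, $q(\widehat\nu,T)$ and $q(\widehat\nu,T)+r(x)$ straight while matching the stochastic and the $\gamma$ parts simultaneously — to be the only real obstacle; there is no conceptual difficulty once the weighted-average form of $\widehat b$ and its conditional variance are in hand.
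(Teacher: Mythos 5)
Your proposal is correct and follows essentially the same route as the paper: the paper's proof likewise combines the explicit expressions \eqref{est_b_W} and \eqref{est_b_B}, the conditional variances $\text{Var}(\widetilde{b}_W\,|\,x)=\sigma_{\varepsilon}^2/\SSW$ and $\text{Var}(\widetilde{b}_B\,|\,x)=\sigma_{\varepsilon}^2\,q(\nu,T)/\SSB$, and Maddala's equality $\widehat{b}=\widehat{w}\,\widetilde{b}_W+(1-\widehat{w})\,\widetilde{b}_B$, which is exactly the precision-weighted combination you write down. The only cosmetic difference is that you derive the weighted-average form from first principles rather than citing Maddala (1971), and you make explicit the bookkeeping via $r(x)=\SSB/\SSW$ that the paper leaves implicit.
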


\begin{proof}
It can be shown that
\begin{align}
\label{Dist_BtildeW_BtildeB}
\widetilde{b}_W \sim N\left( b_W, \, \frac{\sigma_{\varepsilon}^2}{\SSW} \right) \, \,  \text{ and } \, \,
\widetilde{b}_B \sim N\left( b_B, \, \frac{ \sigma^2_{\varepsilon} (\nu + T^{-1})}{\SSB} \right).
\end{align}
Thus $\text{Var}(\widetilde{b}_W \, | \, x) = \sigma_{\varepsilon}^2 / \SSW$ and $\text{Var}(\widetilde{b}_B \, | \, x ) = \left( \sigma^2_{\varepsilon}(\nu + T^{-1}) \right) / \SSB$.
Lemma \ref{lem_long_gI_gJ_h} follows from this, equations \eqref{est_b_W} and \eqref{est_b_B}, and Maddala's (1971) equality $\widehat{b} = \widehat{w} \, \widetilde{b}_W + (1 - \widehat{w}) \, \widetilde{b}_B$.
\end{proof}

\subsection*{Proof of Theorem \ref{thm_cov_depends}}

Let $\varepsilon_{it}^{\dag} = \varepsilon_{it}/\sigma_{\varepsilon}$, $\overline{\varepsilon}_i^{\dag} = \overline{\varepsilon}_i/\sigma_{\varepsilon}$, $\overline{\varepsilon}^{\dag} = \overline{\varepsilon}/\sigma_{\epsilon}$, $\eta_i^{\dag} = \eta_i/\sigma_{\eta}$ and $\overline{\eta}^{\dag} = \overline{\eta}/\sigma_{\eta}$.
The $\eta_i^{\dag}$'s and
$\varepsilon_{it}^{\dag}$'s are iid $N(0, 1)$.
It can be shown that
\begin{align*}
\frac{r_{it}}{\sigma_{\varepsilon}} &= - \frac{(x_{it} - \overline{x}_i) \sum_{i=1}^N \sum_{t=1}^T (x_{it} - \overline{x}_i)(\varepsilon_{it}^{\dag} - \overline{\varepsilon}_i^{\dag})}{\SSW} + (\varepsilon^{\dag}_{it} - \overline{\varepsilon}^{\dag}_i) \\[1.2ex]
\frac{\widetilde{r}_i}{\sigma_{\varepsilon}} &= - \left( \frac{ (\nu^{1/2} \, \overline{\eta}^{\dag} + \overline{\varepsilon}^{\dag}) \sum_{i=1}^N \overline{x}_i^2 - \overline{x} \sum_{i=1}^N \overline{x}_i (\nu^{1/2} \,\eta_i^{\dag} + \overline{\varepsilon}_i^{\dag})}{\SSB} \right) \\
& \indent - \left( \frac{ \sum_{i=1}^N (\overline{x}_i - \overline{x})\left( \nu^{1/2} \, (\eta_i^{\dag} - \overline{\eta}^{\dag}) + (\overline{\varepsilon}_i^{\dag} - \overline{\varepsilon}^{\dag}) \right)}{\SSB} \right) \overline{x}_i + \nu^{1/2} \, \eta_i^{\dag} + \overline{\varepsilon}_i^{\dag}.
\end{align*}
Thus $\widehat{\sigma}_{\varepsilon} / \sigma_{\varepsilon}$ and $\widehat{\sigma}_{\eta} / \sigma_{\varepsilon}$ can be expressed in terms of the $\varepsilon_{it}^{\dag}$'s, $\eta_i^{\dag}$'s, $\nu$ and $x$.
Hence $\widehat{\nu} = \widehat{\sigma}_{\eta} / \widehat{\sigma}_{\varepsilon}$ can be expressed in terms of the $\varepsilon_{it}^{\dag}$'s, $\eta_i^{\dag}$'s, $\nu$ and $x$.
Now, using Lemma \ref{lem_long_gI_gJ_h}, divide the numerator and denominator of $\widehat{g}_I$, $\widehat{g}_J$ and $\widehat{h}$ by $\sigma_{\varepsilon}$.  It follows that $\widehat{g}_I$, $\widehat{g}_J$ and $\widehat{h}$ can be expressed in terms of $\varepsilon_{it}^{\dag}$'s, $\eta_i^{\dag}$'s, $\nu$, $\gamma$ and $x$.

\subsection*{Proof of Theorem \ref{thm_cov_even}}

Define $\varepsilon = (\varepsilon_{11}, \dots, \varepsilon_{1J}, \dots, \varepsilon_{N1}, \dots, \varepsilon_{NJ})$ and
$\eta= (\eta_1, \dots, \eta_N)$.
Introduce the notation $\widehat{g}_I = \widehat{g}_I(x, \varepsilon, \eta, \gamma)$, $\widehat{g}_J = \widehat{g}_J(x, \varepsilon)$ and $\widehat{h} = \widehat{h}(x, \varepsilon, \eta, \gamma)$ to show the dependence of $\widehat{g}_I$, $\widehat{g}_J$ and $\widehat{h}$ on $x, \varepsilon, \eta$ and $\gamma$.
Note that $\widehat{g}_J(x, \varepsilon)$ is not a function of $\gamma$.
We have $\{ |\widehat{g}_I| \leq z_{1-\alpha/2} \} = \{ - z_{1-\alpha/2} \leq - \widehat{g}_I \leq z_{1-\alpha/2} \}$, $ \{ |\widehat{h}| \leq z_{1-\alpha_H/2} \} = \{ -z_{1-\alpha_H/2} \leq - \widehat{h} \leq z_{1-\alpha_H/2} \}$ and $\{ |\widehat{h}| > z_{1-\alpha_H/2} \} = \{ -\widehat{h} < z_{1-\alpha_H/2} \} \cup \{ -\widehat{h} > z_{1-\alpha_H/2} \}$.
Thus for the coverage probability to be an even function of $\gamma$, it is sufficient to prove that
(a) the distribution of $\big( \widehat{g}_I(x, \varepsilon, \eta, d), \widehat{h}(x, \varepsilon, \eta, d) \big)$ is the same as the distribution of $\big(-\widehat{g}_I(x, \varepsilon, \eta, -d), -\widehat{h}(x, \varepsilon, \eta, -d) \big)$ and (b)
the distribution of $\big( \widehat{g}_J(x, \varepsilon), \widehat{h}(x, \varepsilon, \eta, d) \big)$ is the same as the distribution of $\big(-\widehat{g}_J(x, \varepsilon), -\widehat{h}(x, \varepsilon, \eta, -d) \big)$.
For the sake of brevity, we
only give the proof of (a). The proof of (b) is similar.

Let $\varepsilon_{it}^* = -\varepsilon_{it}$ for $i=1, \dots, N$ and $t=1, \dots, T$, and \newline
$\varepsilon^* = (-\varepsilon_{11}, \dots, -\varepsilon_{1J}, \dots, -\varepsilon_{N1}, \dots, -\varepsilon_{NJ})$.
Also let $\eta_i^* = - \eta_i$ for $i=1, \dots, N$ and $\eta^* =(-\eta_1, \dots, -\eta_N)$.
Since the $\varepsilon_{it}$'s and $\eta_i$'s are independent, the
$\varepsilon_{it}$'s are iid $N(0, \sigma^2_{\varepsilon})$ and the $\eta_i$'s are iid $N(0, \sigma^2_{\eta})$, it follows that the
$\varepsilon_{it}^*$'s and $\eta_i^*$'s are independent, the
$\varepsilon_{it}^*$'s are iid $N(0, \sigma^2_{\varepsilon})$ and the $\eta_i^*$'s are iid $N(0, \sigma^2_{\eta})$.
Therefore $\big( \widehat{g}_I(x, \varepsilon^*, \eta^*, d), \widehat{h}(x, \varepsilon^*, \eta^*, d) \big)$
has the same distribution as $\big( \widehat{g}_I(x, \varepsilon, \eta, d), \widehat{h}(x, \varepsilon, \eta, d) \big)$.
Using Lemma \ref{lem_long_gI_gJ_h}, it can be shown that
$\big(-\widehat{g}_I(x, \varepsilon, \eta, d), -\widehat{h}(x, \varepsilon, \eta, d) \big)$ has the same distribution as
$\big(\widehat{g}_I(x, \varepsilon^*, \eta^*, -d), \widehat{h}(x, \varepsilon^*, \eta^*, -d) \big)$.
It follows that the distribution of
$\big(\widehat{g}_I(x, \varepsilon, \eta, d), \widehat{h}(x, \varepsilon, \eta, d) \big)$ is the same as the distribution of \newline
$\big(-\widehat{g}_I(x, \varepsilon, \eta, -d),-\widehat{h}(x, \varepsilon, \eta, -d) \big)$.

\subsection*{Proof of Theorem \ref{thm_pivot}}

The following lemma defines new quantities whose joint distribution does not depend on any unknown parameters.

\begin{lemma}
\label{dist_tau_phi}
For $i = 1, \dots, N$ and $t = 1, \dots, T$, define
\begin{equation*}
\vartheta_{it}^{\dag} =  \frac{ \varepsilon_{it} - \overline{\varepsilon}_i}{\sigma_{\varepsilon}} \indent \text{ and } \indent  \varphi_i^{\dag} = \frac{ \eta_i + \overline{\varepsilon}_i}{\left( \sigma^2_{\varepsilon}\left( \nu + J^{-1} \right) \right)^{1/2} }.
\end{equation*}
Then the random vector $(\varphi_1^{\dag}, \dots, \varphi_N^{\dag}, \vartheta_{11}^{\dag}, \dots, \vartheta_{1T}^{\dag}, \dots, \vartheta_{N1}^{\dag}, \dots, \vartheta_{NT}^{\dag})$ has a multivariate normal distribution with mean vector 0 and covariance matrix which does not depend on any unknown parameters.
\end{lemma}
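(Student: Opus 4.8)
The plan is to exploit joint normality. Because each $\vartheta_{it}^{\dag}$ and each $\varphi_i^{\dag}$ is a fixed linear combination of the mutually independent Gaussian variables $\eta_1, \dots, \eta_N, \varepsilon_{11}, \dots, \varepsilon_{NT}$, the stacked random vector is automatically multivariate normal. Such a distribution is pinned down by its mean vector and covariance matrix, so it suffices to (a) verify the mean vector is $0$ and (b) compute every entry of the covariance matrix and confirm that each is a constant depending only on $T$, with no residual dependence on $\sigma_{\varepsilon}^2$, $\sigma_{\eta}^2$ or $\nu$. Since $E(\varepsilon_{it}) = E(\eta_i) = 0$ and $E(\overline{\varepsilon}_i) = 0$, the mean vector is immediate.

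First I would record the elementary moments $\text{Var}(\varepsilon_{it}) = \sigma_{\varepsilon}^2$, $\text{Cov}(\varepsilon_{it}, \overline{\varepsilon}_i) = T^{-1}\sigma_{\varepsilon}^2$ and $\text{Var}(\overline{\varepsilon}_i) = T^{-1}\sigma_{\varepsilon}^2$, together with the independence of the $\eta_i$'s from all $\varepsilon$'s and the independence across distinct individuals. For the $\vartheta$ block a direct computation then gives $\text{Var}(\varepsilon_{it} - \overline{\varepsilon}_i) = \sigma_{\varepsilon}^2(1 - T^{-1})$ and, for $t \neq s$, $\text{Cov}(\varepsilon_{it} - \overline{\varepsilon}_i, \varepsilon_{is} - \overline{\varepsilon}_i) = -T^{-1}\sigma_{\varepsilon}^2$; after dividing by $\sigma_{\varepsilon}^2$ these become $1 - T^{-1}$ and $-T^{-1}$, free of unknown parameters, while every cross-individual entry vanishes by independence.

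The key step is the $\varphi$ block, where the denominator is engineered to cancel the unknown scale. Using $\sigma_{\eta}^2 = \nu\,\sigma_{\varepsilon}^2$, I would compute $\text{Var}(\eta_i + \overline{\varepsilon}_i) = \sigma_{\eta}^2 + T^{-1}\sigma_{\varepsilon}^2 = \sigma_{\varepsilon}^2(\nu + T^{-1})$, which is exactly the square of the normalizing denominator of $\varphi_i^{\dag}$; hence $\text{Var}(\varphi_i^{\dag}) = 1$. Distinct $\varphi_i^{\dag}$ and $\varphi_j^{\dag}$ are uncorrelated by independence across individuals. Finally, for the cross covariance I would observe that $\text{Cov}(\eta_i + \overline{\varepsilon}_i, \varepsilon_{it} - \overline{\varepsilon}_i) = \text{Cov}(\overline{\varepsilon}_i, \varepsilon_{it}) - \text{Var}(\overline{\varepsilon}_i) = T^{-1}\sigma_{\varepsilon}^2 - T^{-1}\sigma_{\varepsilon}^2 = 0$, so the entire $\varphi$--$\vartheta$ cross block is zero and the two families are in fact independent.

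There is no genuine obstacle here --- the argument is a short sequence of routine second-moment calculations --- but the one point that warrants care is confirming that the $\sigma_{\eta}^2$ arising in $\text{Var}(\eta_i + \overline{\varepsilon}_i)$ combines with $\sigma_{\varepsilon}^2$ precisely through the ratio $\nu$, so that the normalization built into $\varphi_i^{\dag}$ strips out the last trace of the unknown variances. Assembling the three blocks then produces a covariance matrix whose entries depend only on $N$ and $T$, which establishes the claim.
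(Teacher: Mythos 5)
Your proof is correct and follows essentially the same route as the paper: joint normality from linearity in the independent Gaussians, then a direct check that every first and second moment of the normalized quantities is free of $\sigma_{\varepsilon}^2$, $\sigma_{\eta}^2$ and $\nu$. Your value $\text{Cov}(\vartheta_{it}^{\dag}, \vartheta_{is}^{\dag}) = -T^{-1}$ for $t \neq s$ is in fact the correct sign (the paper states $T^{-1}$, an apparent typo), though this does not affect the conclusion since the entry is still parameter-free.
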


\begin{proof}
We assume that the $\varepsilon_{it}$'s and $\eta_i$'s are independent, that the
$\varepsilon_{it}$'s are iid $N(0, \sigma^2_{\varepsilon})$ and that the $\eta_i$'s are iid $N(0, \sigma^2_{\eta})$.
Thus $E(\varepsilon_{it} - \overline{\varepsilon}_i) = 0$, $\text{Cov}(\varepsilon_{it}, \overline{\varepsilon}_i) = T^{-1} \, \sigma^2_{\varepsilon}$, $\text{Var}(\varepsilon_{it} - \overline{\varepsilon}_i) = \sigma^2_{\varepsilon}(1 - T^{-1})$, $E(\eta_i + \overline{\varepsilon}_i) = 0$, $\text{Cov}(\eta_i, \, \overline{\varepsilon}_i) = 0$ and $\text{Var}(\eta_i + \overline{\varepsilon}_i) = \sigma^2_{\varepsilon} \, (\nu + T^{-1})$.
It follow that $E(\vartheta_{it}^{\dag}) = 0$, $E(\varphi_i^{\dag}) = 0$, $\text{Var}(\vartheta_{it}^{\dag}) = 1 - T^{-1}$ and $\text{Var}(\varphi_i^{\dag}) = 1$.
Also, for $i \not= k$ and $t \not= s$, we have $\text{Cov}(\vartheta_{it}^{\dag}, \vartheta_{is}^{\dag}) = T^{-1}$, $\text{Cov}(\vartheta_{it}^{\dag}, \, \vartheta_{kt}^{\dag}) = 0$, $\text{Cov}(\varphi_i^{\dag}, \varphi_k^{\dag}) = 0$, $\text{Cov}(\varphi_i^{\dag}, \vartheta_{it}^{\dag}) = 0$ and $\text{Cov}(\varphi_k^{\dag}, \vartheta_{it}^{\dag}) = 0$.
\end{proof}

\noindent The denominator of \eqref{nu_hat_over_nu} can be written as
\begin{equation*}
\frac{\sum_{i=1}^N \sum_{t=1}^T (\vartheta_{it}^{\dag})^2}{N(T-1)} - \frac{\left( \sum_{i=1}^N \sum_{t=1}^T (x_{it} - \overline{x}_i) \, \vartheta_{it}^{\dag} \right)^2}{N(T-1) \, \SSW},
\end{equation*}
which is in terms of the $\vartheta_{it}^{\dag}$'s and $x$.  In a similar manner, the numerator of \eqref{nu_hat_over_nu} can be written in terms of the $\vartheta_{it}^{\dag}$'s, $\varphi_i^{\dag}$'s and $x$.  It follows from Lemma \ref{dist_tau_phi}
that the distribution of \eqref{nu_hat_over_nu} does not depend on any unknown parameters.

\subsection*{Proof of Theorem \ref{thm_sel}}

It can be shown that the length of $I(\widehat{\sigma}_{\varepsilon}, \widehat{\nu})$ is $2 \, z_{1-\alpha/2} \, \widehat{\sigma}_{\varepsilon} \, \SSW^{-1/2} \, \widehat{w}^{1/2}$ and that the length of $J(\widehat{\sigma}_{\varepsilon})$ is $2 \, z_{1-\alpha/2} \, \widehat{\sigma}_{\varepsilon} \, \SSW^{-1/2}$.  Hence the length of $K(\widehat{\sigma}_{\varepsilon}, \widehat{\nu})$ is equal to $2 \, z_{1-\alpha/2} \, \widehat{\sigma}_{\varepsilon} \, \SSW^{-1/2} \left( \widehat{w}^{1/2} \, {\cal I}({\cal H}) + {\cal I}({\cal H}^c) \right)$.  Also, the length of $J_{c^*}(\widehat{\sigma}_{\varepsilon})$ is equal to $2 \, \Phi^{-1}\left((c^* + 1)/2 \right) \widehat{\sigma}_{\varepsilon} \SSW^{-1/2}$.  Thus the scaled expected length is equal to
\begin{equation*}
\frac{E \left( 2 \, z_{1-\alpha/2} \, \widehat{\sigma}_{\varepsilon} \, \SSW^{-1/2} \left( \widehat{w}^{1/2} {\cal I}({\cal H}) + {\cal I}({\cal H}^c) \right) \, | \, x \right)}{E \left(2 \, \Phi^{-1}\left((c^* + 1)/2 \right) \widehat{\sigma}_{\varepsilon} \, \SSW^{-1/2} \right) },
\end{equation*}
which can be simplified to obtain \eqref{scaled_expected_length}.

\subsection*{Proof of Theorem \ref{thm_sel_depends}}

Consider the conditional scaled expected length expression given by \eqref{scaled_expected_length}.
In the proof of Theorem 1 we have shown that $(\widehat{\sigma}_{\varepsilon} / \sigma_{\varepsilon})$ is a function of the $\varepsilon_{it}^{\dag}$'s and $x$.
Next, $c^*$ depends on $1 - \overline{\alpha}$, and ${\cal I}({\cal H})$ and ${\cal I}({\cal H}^c)$ depend on $\alpha_H$ and $\widehat{h}$.
It follows from the proof of Theorem 1 that $\widehat{h}$ is a function of the $\varepsilon_{it}^{\dag}$'s, $\eta_i^{\dag}$'s, $\nu$, $\gamma$ and $x$.
We have also shown in the proof of Theorem 1 that $\widehat{\nu}$ is a function of the $\varepsilon_{it}^{\dag}$'s, $\eta_i^{\dag}$'s, $\nu$ and $x$, which implies that $\widehat{w} = q(\widehat{\nu}, T) / (q(\widehat{\nu}, T) + r(x))$  is a function of the $\varepsilon_{it}^{\dag}$'s, $\eta_i^{\dag}$'s, $\nu$ and $x$.

\subsection*{Proof of Theorem \ref{thm_sel_even}}

The conditional scaled expected length \eqref{scaled_expected_length} depends on $\gamma$ only through ${\cal I}({\cal H})$ and ${\cal I}({\cal H}^c)$.  Similarly to the proof of Theorem 2, we write $\widehat{h} = \widehat{h}(x, \varepsilon, \eta, \gamma)$ to emphasize the dependence of $\widehat{h}$ on $x$, $\varepsilon$, $\eta$ and $\gamma$.  In the proof of Theorem 2 we have shown that the distribution of $\widehat{h}(x, \varepsilon, \eta, d)$ is the same as the distribution of $-\widehat{h}(x, \varepsilon, \eta, -d)$.  It follows that both ${\cal I}({\cal H})$ and ${\cal I}({\cal H}^c)$ are even functions of $\gamma$.  Therefore the conditional scaled expected length is an even function of $\gamma$.

\subsection*{Proof of Theorem \ref{thm: dist_gih_gjh}}

It follows from \eqref{Dist_BtildeW_BtildeB}, Maddala's (1971) equality and the fact that $\text{Cov}(\widetilde{b}_W, \widetilde{b}_B \, | \, x) = 0$, that
\begin{align*}
\widehat{b} - b &\sim N \left( \frac{ \xi \, r(x)}{q(\nu, T) + r(x)}, \,  \frac{\sigma^2_{\varepsilon}}{\SSW} \frac{q(\nu, T) }{q(\nu, T)+ r(x) }  \right), \label{bhat_b_dist} \\[1.2ex]
\widetilde{b}_W - b &\sim N\left(0, \, \frac{\sigma^2_{\varepsilon}}{\SSW} \right),  \\[1.2ex]
\widetilde{b}_W - \widetilde{b}_B &\sim N\left( -\xi, \, \frac{\sigma^2_{\varepsilon}}{\SSW} \left( 1 + \dfrac{q(\nu, T)}{r(x)} \right) \right).
\end{align*}
Now use the definitions of $g_I$, $g_J$ and $h$ and find the relevant expected values, variances and covariances.

\end{appendix}

\section*{References}

\rf Aldrich, J., 2005. Fisher and regression. Statistical Science, 20, 401--417.

\smallskip

\rf Cox, D.R., 2006. Principles of Statistical Inference. Cambridge University Press: Cambridge.

\smallskip

\rf Guggenberger, P., 2010. The impact of a Hausman pretest on the size of a hypothesis test: the panel data case. Journal of Econometrics, 156, 337--343.

\smallskip

\rf Hausman, J. A., 1978. Specification test in econometrics. Econometrica, 46, 1251--1271.

\smallskip

\rf Kabaila, P., Mainzer, R., Farchione, D., 2015. The impact of a Hausman pretest, applied to panel data, on the coverage probability of
confidence intervals. Economics Letters, 131, 12--15.

\smallskip

\rf Koopmans, T.C., 1937. Linear Regression Analysis of Economic Time Series. Bohn, Haarlem, Netherlands.

\smallskip

\rf Maddala, G. S., 1971.  The use of variance components models in pooling cross section and time series data.  Econometrica, 39, 341 -- 358.

\smallskip

\rf Wooldridge, J. M., 2013.  Introductory econometrics: a modern approach.  5th edition.   South-Western: Ohio.

\end{document}